\def\BBox{\kern  -0.2cm\hbox{\vrule width 0.2cm height 0.2cm}}
\newtheorem{example}{Example}
\newtheorem{remark}{Remark}
\newtheorem{teo}{Theorem}[section]
\newtheorem{coro}[teo]{Corollary}
\newtheorem{lema}[teo]{Lemma}
\newtheorem{prop}[teo]{Proposition}
\theoremstyle{definition}
\theoremstyle{remark}
\title{On ($1,C_4$) one-factorization and two orthogonal ($2,C_4$) one-factorization of complete graphs}
\author{Adrián Vázquez-Ávila\thanks{adrian.vazquez@unaq.mx}\\
{\small Subdirección de Ingeniería y Posgrado}\\
{\small Universidad Aeronáutica en Querétaro}\\
}
\date{}
\begin{document}
%\linenumbers
\maketitle
%%%%%%%%%%%%%%%%%%%%%%%%%%%%%%%%%%%%%%%%%%%%%%
\begin{abstract}
An one-factorization $\mathcal{F}$ of the complete graph $K_n$ is ($l,C_k$), where $l\geq0$ and $k\geq4$ are integers, if the union $F\cup G$, for any $F,G\in\mathcal{F}$, includes exactly $l$ (edge-disjoint) cycles of length $k$ ($lk\leq n$). Moreover, a pair of orthogonal one-factorizations $\mathcal{F}$ and $\mathcal{G}$ of the complete graph $K_n$ is ($l,C_k$) if the union $F\cup G$, for any $F\in\mathcal{F}$ and $G\in\mathcal{G}$, includes exactly $l$ cycles of length $k$.

In this paper, we prove the following: if $q\equiv11$ (mod 24) is an odd prime power, then there is a ($1,C_4$) one-factorization of $K_{q+1}$. Also, there is a pair of orthogonal ($2,C_4$) one-factorization of $K_{q+1}$.
\end{abstract}

%%%%%%%%%%%%%%%%%%%%%%%%%%%%%%%%%%%%%%%%%%%%%%
\textbf{Keywords.} One-factorization, orthogonal one-factorization, Strong starters.

%{\bf MSC 2000.} ~05C35.
%%%%%%%%%%%%%%%%%%%%%%%%%%%%%%%%%%%%%%%%%%%%%%%%%%%%%%
%%%%%%%%%%%%%%%%%%%%%%%%%%%%%%%%%%%%%%%%%%%%%%%%%%%%%%INTRODUCTION
\section{Introduction}
A \emph{one-factor} of a graph $G$ is a regular spanning subgraph of degree one. A \emph{one-factorization} of a graph $G$ is a set $\mathcal{F}=\{F_1, F_2,\ldots, F_n\}$ of edge disjoint one-factors such that $E(G)=\displaystyle\cup_{i=1}^nE(F_i)$. Two one-factorizations $\mathcal{F}=\{F_1, F_2,\ldots, F_n\}$ and $\mathcal{H}=\{H_1, H_2,\ldots, H_n\}$ of a graph $G$ are \emph{orthogonal} if $|F\cap H|\leq1$, for every $F\in\mathcal{F}$ and $H\in\mathcal{H}$. A one-factorization $\mathcal{F}=\{F_1, F_2,\ldots, F_n\}$ of a graph $G$ is ($l,C_k$), where $l\geq0$ and $k\geq4$ integers, if $F_i\cup F_j$ include exactly $l$ cycles of length $k$ ($lk\leq n$). In particular, if $l=0$, we said that the one factorization is $C_k$-free, see for example \cite{Bao,Meszka,AvilaC4free}.

A pair of orthogonal one-factorizations $\mathcal{F}$ and $\mathcal{H}$ of the complete graph $K_n$ is ($l,C_k$), where $l\geq0$ and $k\geq4$ integers, if $F\cup H$ include exactly $l$ cycles of length $k$, for all $F\in\mathcal{F}$ and $H\in\mathcal{H}$.

An interesting way of constructing one-factorizations of the complete graph is by using starters of additive Abelian groups of odd order: Let $\Gamma$ be a finite additive Abelian group of odd order $n=2k+1$. And let $\Gamma^*=\Gamma\setminus\{0\}$ be the set of non-zero elements of $\Gamma$. A \emph{starter} for $\Gamma$ is a set $S=\{\{x_1,y_1\},\ldots,\{x_k,y_k\}\}$ such that
$\left\{x_1,\ldots,x_k,y_1,\ldots,y_k\right\}=\Gamma^*$
and $\left\{\pm(x_i-y_i):i=1,\ldots,k\right\}=\Gamma^*$. Moreover, if $\left\{x_i+y_i:i=1,\ldots,k\right\}\subseteq\Gamma^*$ with $\left|\left\{x_i+y_i:i=1,\ldots,k\right\}\right|=k$, then $S$ is called \emph{strong starter} for $\Gamma$.  To see some works related to strong starters, the reader may consult \cite{Avila,MR0325419,dinitz1984,MR1010576,MR0392622,MR1044227,MR808085,MR0249314,MR0260604,Skolem,AvilaSkolem,AvilaSkolem2,AvilaSkolem3}.	

Strong starters were first introduced by Mullin and Stanton in \cite{MR0234587} in constructing of Room
squares. Starters and strong starters have been useful to built many combinatorial designs such as Room cubes \cite{MR633117}, Howell designs \cite{MR728501}, Kirkman triple systems \cite{MR808085,MR0314644}, Kirkman squares and cubes \cite{MR833796,MR793636},  and factorizations of complete graphs \cite{MR0364013,Bao,MR2206402,MR1010576,MR685627,AvilaC4free}.

Let $\Gamma$ be a finite additive Abelian group of odd order $n=2k+1$. It is well known that $F_\gamma=\{\{\infty, \gamma\}\}\cup\{\{x_i+\gamma,y_i+\gamma\} : 1 \leq i\leq k\}$, for all $\gamma\in\Gamma$, forms a one-factorization of the complete graph on $\Gamma\cup\{\infty\}$. Hence, if $F_0=\{\{\infty,0\}\}\cup\{\{x_i,y_i\} : 1 \leq i\leq k\}$, then $F_\gamma=F_0+\gamma$, for all $\gamma\in\Gamma^*$. On the other hand, let $S=\{\{x_i,y_i\}: 1\leq i\leq k\}$ and $T=\{\{u_i,v_i\}: 1\leq i\leq k\}$ two starters for $\Gamma$. Without loss of generality, we assume that $x_i-y_i=u_i-v_i$, for all $i=1,\ldots,k$. Then $S$ and $T$ are \emph{orthogonal starters} if $u_i-x_i=u_j-x_j$ implies $i=j$, and if $u_i\neq x_i$, for all $i=1,\ldots,k$.

Let $S=\{\{x_i,y_i\}:i=1,\ldots,k\}$ be a starter for a finite additive Abelian group $\Gamma$ of odd order $n=2k+1$. It is not hard to see that $-S=\{\{-x_i,-y_i\}:i=1,\ldots,k\}$ is also a starter for $\Gamma$. 

\begin{teo}\cite{MR623318}
If there is a strong starter $S$ in an additive Abelian group of odd order, then $S$ and $-S$ are pairwise orthogonal starters.		
\end{teo}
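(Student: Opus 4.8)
The plan is to make the orthogonality correspondence between $S$ and $-S$ explicit and then observe that the two conditions in the definition of orthogonal starters are precisely the two halves of the strong starter hypothesis. First I would recall, as already noted in the excerpt, that $-S=\{\{-x_i,-y_i\}:i=1,\ldots,k\}$ is indeed a starter for $\Gamma$: negation permutes $\Gamma^*$, and $\pm\big((-x_i)-(-y_i)\big)=\pm(x_i-y_i)$, so both defining properties of a starter are inherited from $S$.

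Next I would fix the orderings needed to apply the definition. Writing each pair of $S$ as an ordered pair $(x_i,y_i)$ with difference $d_i=x_i-y_i$, the condition $\{\pm d_i:i=1,\ldots,k\}=\Gamma^*$ shows that for each $i$ there is a unique pair of $-S$ whose ordered difference equals $d_i$; since $(-y_i)-(-x_i)=x_i-y_i=d_i$, that pair is $(-y_i,-x_i)$. Thus, in the notation of the definition of orthogonal starters, I take $S=\{\{x_i,y_i\}\}$ and $T=-S$ with $\{u_i,v_i\}=\{-x_i,-y_i\}$ ordered so that $u_i-v_i=d_i$; explicitly $u_i=-y_i$ and $v_i=-x_i$, and $x_i-y_i=u_i-v_i$ holds by construction.

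With this matching in hand, the two conditions to verify become immediate consequences of $S$ being \emph{strong}. The condition $u_i\neq x_i$ becomes $-y_i\neq x_i$, i.e.\ $x_i+y_i\neq 0$, which holds because $\{x_i+y_i:i=1,\ldots,k\}\subseteq\Gamma^*$. The condition ``$u_i-x_i=u_j-x_j$ implies $i=j$'' becomes, since $u_i-x_i=-y_i-x_i=-(x_i+y_i)$, the statement that $x_i+y_i=x_j+y_j$ implies $i=j$; this is exactly the requirement $\big|\{x_i+y_i:i=1,\ldots,k\}\big|=k$ in the definition of a strong starter. Hence $S$ and $-S$ are orthogonal starters, and as there are only the two of them the ``pairwise'' conclusion is the same statement.

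I do not expect a genuine obstacle here: the only point requiring care is the choice of orderings of the pairs of $S$ and of $-S$ so that their signed differences coincide (an unordered pair determines a difference only up to sign), after which the two orthogonality conditions are literally the ``nonzero sums'' and ``pairwise distinct sums'' clauses of the strong starter definition.
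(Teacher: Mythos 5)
Your argument is correct and complete: matching $\{x_i,y_i\}$ with the pair of $-S$ ordered as $(u_i,v_i)=(-y_i,-x_i)$ gives $u_i-v_i=x_i-y_i$, and then $u_i-x_i=-(x_i+y_i)$ turns the two orthogonality clauses exactly into the ``sums are nonzero'' and ``sums are distinct'' conditions of a strong starter. The paper states this theorem with a citation to Horton and gives no proof of its own, so there is nothing to compare against; your direct verification is the standard argument for this result.
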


Let $q$ be an odd prime power. An element $x\in\mathbb{F}_q^*$ is called a \emph{quadratic residue} if there exists an element $y\in\mathbb{F}_q^{*}$ such that $y^2=x$. If there is no such $y$, then $x$ is called a \emph{non-quadratic residue.} The set of quadratic residues of $\mathbb{F}_q^{*}$ is denoted by $QR(q)$ and the set of non-quadratic residues is denoted by $NQR(q)$. It is well known that $QR(q)$ is a cyclic subgroup of $\mathbb{F}_q^{*}$ of order $\frac{q-1}{2}$ (see for example \cite{MR2445243}). Also, it is well known that if either $x,y\in QR(q)$ or $x,y\in NQR(q)$, then $xy\in QR(q)$. Also, if $x\in QR(q)$ and $y\in NQR(q)$, then $xy\in NQR(q)$. For more details of this kind of results the reader may consult \cite{burton2007elementary,MR2445243}.

Horton in \cite{MR623318} proved the following (see too \cite{Avila}):

\begin{prop}\cite{MR623318}\label{prop:Horton}
If $q\equiv3$ (mod 4) is an odd prime power ($q\neq3$) and $\beta\in NQR(q)\setminus\{-1\}$, then 
	\begin{eqnarray*}\label{strong_1}
		S_\beta=\left\{\{x,x\beta\}:x\in QR(q)\right\},
	\end{eqnarray*} 
is a strong starter for $\mathbb{F}_q$.
\end{prop}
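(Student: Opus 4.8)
The plan is to verify directly, against the definitions recorded in the introduction, the two conditions that make $S_\beta$ a starter for $\mathbb{F}_q$ and then the two additional conditions that promote it to a \emph{strong} starter. Throughout I would write $k=\frac{q-1}{2}=|QR(q)|$, so that $S_\beta$ consists of exactly $k$ pairs, one for each $x\in QR(q)$. First I would record the two facts that drive everything: since $1\in QR(q)$ while $\beta\in NQR(q)$ we have $\beta\neq 1$, so each $\{x,x\beta\}$ is a genuine two-element set; and since $q\equiv 3\pmod 4$ the element $-1$ is a non-residue, i.e. $-1\in NQR(q)$. I would also use the standard multiplicative facts already quoted in the excerpt: multiplication by a fixed non-residue carries $QR(q)$ bijectively onto $NQR(q)$ and vice versa, while multiplication by any residue preserves each of the two classes.

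For the partition condition, the left coordinates $\{x:x\in QR(q)\}$ fill out $QR(q)$, while the right coordinates $\{x\beta:x\in QR(q)\}=\beta\cdot QR(q)$ fill out $NQR(q)$ because $\beta\in NQR(q)$. As $QR(q)$ and $NQR(q)$ are disjoint with union $\mathbb{F}_q^*$, the $2k$ entries appearing in $S_\beta$ are exactly the elements of $\mathbb{F}_q^*$, each occurring once.

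The crux of the argument is the difference condition, and this is precisely where the hypothesis $q\equiv 3\pmod 4$ is essential. Each pair contributes the difference $x-x\beta=x(1-\beta)$, so the set of positive differences is $(1-\beta)\cdot QR(q)$, a single coset of $QR(q)$ and hence one of the two classes $QR(q)$ or $NQR(q)$. Because $-1\in NQR(q)$, multiplying by $-1$ sends this class to the other one; thus $\{\pm(x-x\beta):x\in QR(q)\}$ is the disjoint union of both classes, which is all of $\mathbb{F}_q^*$. The main point to check carefully here is that the $k$ positive differences and the $k$ negative differences are genuinely disjoint, so that together they cover $\mathbb{F}_q^*$ exactly; this is immediate once the two classes are identified as distinct cosets, which is exactly the role played by $-1\notin QR(q)$. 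Had $q\equiv 1\pmod 4$, the negatives would fall in the same class and the cover would fail.

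Finally, for the two strong-starter conditions I would examine the sums $x+x\beta=x(1+\beta)$. They are all nonzero precisely because $\beta\neq -1$, which is the reason $\beta$ is taken from $NQR(q)\setminus\{-1\}$, so that $1+\beta\neq 0$; the map $x\mapsto x(1+\beta)$ is then injective on $\mathbb{F}_q^*$, whence the $k$ sums are pairwise distinct and $\{x+y:\{x,y\}\in S_\beta\}\subseteq\mathbb{F}_q^*$ has cardinality $k$. The remaining hypothesis $q\neq 3$ enters only to guarantee that $NQR(q)\setminus\{-1\}$ is nonempty, so that a legitimate $\beta$ exists at all. Assembling these four verifications establishes that $S_\beta$ is a strong starter for $\mathbb{F}_q$.
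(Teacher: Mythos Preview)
Your verification is correct and is exactly the standard argument: the pairs $\{x,x\beta\}$ with $x\in QR(q)$ partition $\mathbb{F}_q^*$ because $\beta\cdot QR(q)=NQR(q)$; the differences $\pm x(1-\beta)$ fill out both cosets of $QR(q)$ because $-1\in NQR(q)$ when $q\equiv 3\pmod 4$; and the sums $x(1+\beta)$ are nonzero and distinct because $\beta\neq -1$. The paper itself does not supply a proof of this proposition---it is stated with a citation to Horton \cite{MR623318} (and the same computation appears in \cite{Avila})---so there is nothing further to compare; your argument is precisely the one those references give.
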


\begin{teo}\cite{MR623318}
Let $q\equiv3$ (mod 4) be a prime power ($q\neq3$). If $\beta_1,\beta_2\in NQR(q)\setminus\{-1\}$, with $\beta_1\neq\beta_2$, then $S_{\beta_1}$ and $S_{\beta_2}$ are orthogonal. 
\end{teo}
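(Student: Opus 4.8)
The plan is to verify directly the two defining conditions of orthogonal starters, after pairing the blocks of $S_{\beta_1}$ and $S_{\beta_2}$ by their differences. First I would assemble the ingredients. By Proposition~\ref{prop:Horton} both $S_{\beta_1}$ and $S_{\beta_2}$ are starters for $\mathbb{F}_q$; the block of $S_{\beta_i}$ indexed by $x\in QR(q)$ is $\{x,x\beta_i\}$, with difference $\pm x(1-\beta_i)$. The one arithmetic fact I would exploit is that $q\equiv 3\pmod 4$ forces $-1\in NQR(q)$, so that $QR(q)$ and $-QR(q)=NQR(q)$ partition $\mathbb{F}_q^*$. Put $\eta=(1-\beta_1)(1-\beta_2)^{-1}$: this is well defined and nonzero because $\beta_1,\beta_2\ne 1$ (as $1\in QR(q)$), it satisfies $\eta\ne 1$ because $\beta_1\ne\beta_2$, and $\eta(1-\beta_2)=1-\beta_1$.

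Next I would match the blocks. A block $\{x,x\beta_1\}$ of $S_{\beta_1}$ and a block $\{y,y\beta_2\}$ of $S_{\beta_2}$ have the same (unoriented) difference exactly when $y=\pm x\eta$, and exactly one of the two signs puts $y$ in $QR(q)$, precisely because $-1\in NQR(q)$; thus the block of $S_{\beta_2}$ matched to $\{x,x\beta_1\}$ is $\{x\eta,x\eta\beta_2\}$ when $\eta\in QR(q)$ and $\{-x\eta,-x\eta\beta_2\}$ when $\eta\in NQR(q)$. Choosing the labellings so that $x_i-y_i=u_i-v_i$, as the definition of orthogonal starters requires, the first case gives $x_i=x$ and $u_i=x\eta$, hence $u_i-x_i=x(\eta-1)$; the second case, in which one matches the signed difference $x(\beta_1-1)$, gives $x_i=x\beta_1$ and $u_i=-x\eta$, hence $u_i-x_i=-x(\eta+\beta_1)$.

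Finally I would check the two conditions in each case. If $\eta\in QR(q)$, then $u_i-x_i=x(\eta-1)$ is nonzero since $\eta\ne 1$, and $x\mapsto x(\eta-1)$ is injective on $QR(q)$, so the $k$ values $u_i-x_i$ are pairwise distinct; hence $S_{\beta_1}$ and $S_{\beta_2}$ are orthogonal. If $\eta\in NQR(q)$, then $-\beta_1\in QR(q)$ (it is a product of the two non-residues $-1$ and $\beta_1$), so $\eta\ne-\beta_1$, i.e.\ $\eta+\beta_1\ne 0$, which gives $u_i\ne x_i$; and $x\mapsto -x(\eta+\beta_1)$ is again injective on $QR(q)$, so once more the $k$ values $u_i-x_i$ are pairwise distinct and $S_{\beta_1}$, $S_{\beta_2}$ are orthogonal.

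The only delicate point is the bookkeeping: one must pair the blocks by their difference and then pick the orientation making $x_i-y_i=u_i-v_i$, and it is exactly this choice that produces the split according to whether $\eta$ is a quadratic residue. Once the labellings are fixed, both orthogonality conditions collapse to the injectivity of multiplication by a fixed nonzero scalar on $QR(q)$, together with the elementary observation that $-1\in NQR(q)$ keeps the relevant scalars away from $0$; so beyond this careful labelling there is no real obstacle.
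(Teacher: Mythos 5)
Your proof is correct. Note that the paper itself states this theorem as a citation of Horton \cite{MR623318} and gives no proof, so there is nothing internal to compare against; your argument --- pairing the blocks $\{x,x\beta_1\}$ and $\{\pm x\eta,\pm x\eta\beta_2\}$ by their common difference via $\eta=(1-\beta_1)(1-\beta_2)^{-1}$, using $-1\in NQR(q)$ to select the sign, and reducing both orthogonality conditions to the nonvanishing of $\eta-1$ (resp.\ $\eta+\beta_1$) together with injectivity of multiplication by a nonzero scalar on $QR(q)$ --- is the standard one and is essentially Horton's original argument. The bookkeeping you flag as delicate is handled correctly: in particular the quantity $u_i-x_i$ is invariant under simultaneously reversing both orientations, so the case split on the residue class of $\eta$ is exactly what is needed.
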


In this paper, we prove the following:

\begin{teo}
If $q\equiv$3 (mod 8) is an odd prime power such that $q\equiv-1$ (mod 12), then there is $(1,C_4)$ one-factorization of $K_{q+1}$.   
\end{teo}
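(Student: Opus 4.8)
The plan is to realize $K_{q+1}$ on the vertex set $\mathbb{F}_q\cup\{\infty\}$ and to take the one‑factorization $1$‑rotationally generated by a Horton strong starter. Since $q\equiv 3\pmod 8$ forces $q\equiv 3\pmod 4$ and $q\neq 3$, Proposition~\ref{prop:Horton} gives, for each $\beta\in NQR(q)\setminus\{-1\}$, the strong starter $S_\beta=\{\{x,x\beta\}:x\in QR(q)\}$; I set $F_0=\{\{\infty,0\}\}\cup S_\beta$ and $F_\gamma=F_0+\gamma$ for $\gamma\in\mathbb{F}_q$, deferring the choice of $\beta$ (which will need an extra quadratic‑residue property). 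Translation by $-\gamma$ is a graph automorphism fixing $\infty$, so the number of $4$‑cycles in $F_\gamma\cup F_\delta$ equals that in $F_0\cup F_t$ with $t=\delta-\gamma\neq 0$; moreover $x\mapsto cx$ for $c\in QR(q)$ is an automorphism sending $F_0$ to $F_0$ and $F_t$ to $F_{ct}$, so it suffices to treat $F_0\cup F_t$ for one square $t$ and one non‑square $t$. The whole statement thus reduces to: $F_0\cup F_t$ contains exactly one $4$‑cycle for every $t\in\mathbb{F}_q^*$.

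First I would rule out $4$‑cycles through $\infty$. The two edges at $\infty$ in $F_0\cup F_t$ are $\{\infty,0\}$ and $\{\infty,t\}$, so such a cycle must be $\infty\,0\,w\,t\,\infty$ with $\{0,w\}\in F_t$ and $\{w,t\}\in F_0$; writing $w$ two ways via the involutions attached to $F_0$ and $F_t$ and equating, one gets (after simplification) $\beta+\beta^{-1}=1$, i.e. $\beta^{2}-\beta+1=0$. The roots of $x^{2}-x+1$ are primitive sixth roots of unity, which lie in $\mathbb{F}_q$ only if $q\equiv 1\pmod 6$; but $q\equiv -1\pmod{12}$ gives $q\equiv -1\pmod 6$, so $x^{2}-x+1$ has no root in $\mathbb{F}_q$ and no such $w$ exists. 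This is exactly where the hypothesis $q\equiv -1\pmod{12}$ enters.

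Next, the $4$‑cycles avoiding $\infty$. Such a cycle uses two $F_0$‑edges $\{u,u\beta\},\{v,v\beta\}$ with $u,v\in QR(q)$, joined by two $F_t$‑edges which are either $\{u,v\},\{u\beta,v\beta\}$ or $\{u,v\beta\},\{u\beta,v\}$. In each pattern the requirement that the two joining edges lie in $F_t$ (equivalently that their translates by $-t$ lie in $S_\beta$) is a linear system in $u,v$ over $\mathbb{F}_q(\beta)$ once one fixes which of $u-t,u\beta-t$ is a square; solving it in each sign‑sub‑case — some of which are vacuous (forcing $\beta=1$) or degenerate (forcing $u=v$) — and noting that swapping the roles of the two $F_0$‑edges describes the same cycle, one is left with a single candidate $C_a$ from the first pattern and a single candidate $C_b$ from the second, with $u,v$ (hence all four vertices) explicit rational functions of $t$. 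Now $C_a$ is a genuine $4$‑cycle iff a short list of Legendre‑symbol conditions on $u,v,u-t$ holds; using $-1,2\in NQR(q)$ (this is where $q\equiv 3\pmod 8$ is used) these collapse to a single condition $\epsilon(t)=c$, and similarly $C_b$ is genuine iff $\epsilon(t)=c'$. The final step is to choose $\beta$ — say with $\beta^{2}+1\in QR(q)$ and $\beta-1\in NQR(q)$ — so that $\{c,c'\}=\{\epsilon(\beta+1),-\epsilon(\beta+1)\}$ (one computes $c=-\epsilon(\beta+1)$, $c'=\epsilon(\beta+1)$); then for each $t\neq 0$ exactly one of $C_a,C_b$ occurs, so $F_0\cup F_t$ has exactly one $4$‑cycle, finishing the proof.

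Two points carry the work. The routine‑but‑lengthy one is the case analysis above: besides solving the several linear sub‑systems one must reconcile the two parametrizations of each candidate, confirm that the exceptional branch really forces $u=v$, and check the non‑degeneracy conditions (all four vertices distinct and $\neq t$) — which reduce to $\beta\neq\pm 1$ and $\beta^{2}\neq -1$, the latter automatic since $-1\in NQR(q)$. The genuinely delicate one is the existence of a non‑square $\beta\neq-1$ with $\beta^{2}+1\in QR(q)$ and $\beta-1\in NQR(q)$: this is a joint quadratic‑character condition on $\beta,\beta-1,\beta^{2}+1$, and a Weil‑type estimate gives $\tfrac{q}{8}+O(\sqrt q)$ solutions, so such a $\beta$ exists for all but finitely many $q\equiv 11\pmod{24}$, the small cases being disposed of by direct inspection. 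I expect this existence/small‑case bookkeeping, rather than the cycle count, to be where the real attention is needed.
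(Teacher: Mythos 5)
Your proposal follows essentially the same route as the paper: the one-factorization rotationally generated by a Horton starter $S_\beta$ with $\beta\in NQR(q)\setminus\{-1\}$ chosen so that $\beta-1\in NQR(q)$ and $\beta^2+1\in QR(q)$, a symmetry reduction to one or two representative unions $F_0\cup F_t$, the exclusion of $4$-cycles through $\infty$ via the equation $\beta^2-\beta+1=0$, and a sign-pattern case analysis showing that exactly one $4$-cycle avoiding $\infty$ survives according to the quadratic character of $\beta+1$ (this is the content of Lemma~\ref{lemma:final}). Two sub-steps are handled differently. For the $\infty$-cycles you note that $x^2-x+1$ has a root in $\mathbb{F}_q$ only when $6\mid q-1$, which fails for $q\equiv-1\pmod{12}$; the paper's Lemma~\ref{lemma:3QR} reaches the same conclusion by a quadratic-residue computation involving $-3$, and your version is if anything cleaner, since it shows no root exists at all rather than merely no non-square root. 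The substantive divergence is the existence of an admissible $\beta$: you invoke a Weil-type character-sum estimate giving roughly $q/8+O(\sqrt q)$ solutions and defer ``finitely many small cases'' to inspection, without making the error term explicit or listing the exceptional $q$; the paper's Lemma~\ref{lemma:conjunto_no_vacio} instead gives a short direct argument (using the bijection $\beta\mapsto\beta^2$ from $NQR(q)$ onto $QR(q)$ and a contradiction with $\alpha^4+1$) that works uniformly for every $q\equiv11\pmod{24}$. Your strategy is sound, but as written that existence step is non-effective; to close it you would need either the explicit constant in the Weil bound together with verification of the excluded small prime powers, or a direct argument in the spirit of the paper's.
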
 

\begin{teo}
If $q\equiv$3 (mod 8) is an odd prime power such that $q\equiv-1$ (mod 12), then there is a pair of orthogonal $(2,C_4)$ one-factorization of $K_{q+1}$.
\end{teo}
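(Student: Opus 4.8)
The plan is to take the two orthogonal one-factorizations to be those induced by the Horton strong starter $S_\beta$ and by $-S_\beta$, and then to count $4$-cycles in every cross-union, much as in the previous theorem. Since $q\equiv 3\pmod 8$ forces $q\equiv 3\pmod 4$ and $q\neq 3$, Proposition~\ref{prop:Horton} shows that $S_\beta=\{\{x,x\beta\}:x\in QR(q)\}$ is a strong starter of $\mathbb F_q$ for every $\beta\in NQR(q)\setminus\{-1\}$. Fix such a $\beta$ and put $\mathcal F=\mathcal F_{S_\beta}$, with factors $F_\gamma=\{\{\infty,\gamma\}\}\cup(S_\beta+\gamma)$, and $\mathcal G=\mathcal F_{-S_\beta}$, with factors $G_\delta=\{\{\infty,\delta\}\}\cup((-S_\beta)+\delta)$; here $-S_\beta=S_{\beta^{-1}}$, so $\mathcal G$ is again a strong-starter factorization, $\beta^{-1}\in NQR(q)\setminus\{-1\}$ and $\beta^{-1}\neq\beta$. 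By the theorem of Horton quoted in the Introduction (equivalently, by the orthogonality of $S_{\beta_1}$ and $S_{\beta_2}$ for distinct non-quadratic $\beta_1,\beta_2$), $S_\beta$ and $-S_\beta$ are orthogonal starters, hence $\mathcal F$ and $\mathcal G$ are orthogonal one-factorizations of $K_{q+1}$; it remains to show that $F\cup G$ contains exactly two $4$-cycles for all $F\in\mathcal F$, $G\in\mathcal G$.

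Next I would reduce to a single parameter. Translation by $t\in\mathbb F_q$ permutes the factors of $\mathcal F$ and of $\mathcal G$ and preserves $4$-cycles, so the number of $4$-cycles in $F_\gamma\cup G_\delta$ equals that in $F_{\gamma-\delta}\cup G_0$; moreover, for $c\in QR(q)$ the map $x\mapsto cx$ (fixing $\infty$) is an automorphism of both factorizations fixing $G_0$ and sending $F_s$ to $F_{cs}$, so the count for $F_s\cup G_0$ depends only on whether $s\in QR(q)$, $s\in NQR(q)$, or $s=0$. It thus suffices to evaluate the count for one $s$ in each quadratic class and for $s=0$ (in the last case $F_0\cap G_0=\{\{\infty,0\}\}$, so $F_0\cup G_0$ is a pendant edge together with cycles on $\mathbb F_q^{\ast}$).

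The heart of the matter is the classification of the $4$-cycles of $F_s\cup G_0$, split into those through $\infty$ and those inside $\mathbb F_q$. A cycle through $\infty$ is forced to be $\infty,s,w,0$ with $\{\infty,s\}\in F_s$, $\{s,w\}\in G_0$, $\{w,0\}\in F_s$, $\{0,\infty\}\in G_0$; writing out the two middle memberships, with a case split on the quadratic character of $s$, determines $w$ from each, and equating the two values reduces the existence of this $4$-cycle to a single relation in $\beta$ (of the shape $2\beta=1$ or $\beta=2$), where the hypothesis $q\equiv 3\pmod 8$, i.e.\ $2\in NQR(q)$, is the arithmetic input. For the $4$-cycles inside $\mathbb F_q$, one parametrises such a cycle as $v_1v_2v_3v_4$ with $\{v_1,v_2\},\{v_3,v_4\}\in G_0$ and $\{v_2,v_3\},\{v_4,v_1\}\in F_s$; each membership expresses one endpoint affinely in terms of the previous one, with a branch selected by the quadratic character of the element involved, so going once around the cycle yields, on each of the finitely many branches, a single affine equation over $\mathbb F_q$. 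Counting its solutions, discarding those violating the residue constraints of the branch or giving a repeated vertex or a vertex in $\{0,s,\infty\}$, and summing over branches gives the total; the auxiliary cyclotomic relations that govern it — $\beta^2-\beta+1=0$, $\beta^3=1$, $\beta^4=1$ and the like — have no solution in $\mathbb F_q$ precisely because $q\equiv -1\pmod{12}$ rules out primitive $3$rd, $4$th and $6$th roots of unity, which is what prevents spurious short cycles.

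The main obstacle I anticipate is to show that this bookkeeping collapses to the value $2$ in all three cases ($s\in QR(q)$, $s\in NQR(q)$, $s=0$) at once; this is where the hypotheses must be used most carefully, and where, should the raw branch count still depend on a choice, one further restricts $\beta$ to $NQR(q)\setminus\{-1\}$ minus the finitely many values for which some branch equation gains or loses a solution — such a $\beta$ exists since $\tfrac{q-1}{2}\ge 3$ under the hypotheses. Once the count is pinned to $2$ in every case, $(\mathcal F,\mathcal G)$ is an orthogonal $(2,C_4)$ one-factorization of $K_{q+1}$, completing the proof.
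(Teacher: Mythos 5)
Your overall strategy coincides with the paper's: take $\mathcal F$ and $\mathcal G$ to be the one-factorizations generated by $S_\beta$ and $-S_\beta$, invoke Horton's theorem for orthogonality, and count the $4$-cycles in $F_0\cup G_i$ by splitting into cycles through $\infty$ (the paper's Lemma \ref{lemma:beta,betamedios,orto}) and cycles inside $\mathbb F_q$ (Lemma \ref{lemma:final_2}), with a branch analysis indexed by quadratic characters. But there is a genuine gap at the decisive step. For a generic $\beta\in NQR(q)\setminus\{-1\}$ the branch count is \emph{not} $2$: each branch closes into a $4$-cycle or not according to the quadratic characters of $\beta+1$, $\beta-1$ and $\beta^2+1$ (for instance the branch $t_1=t_3=1$, $t_2=-1$ yields $2i=a(\beta+1)$, which has a solution $a\in QR(q)$ exactly when $2i(\beta+1)^{-1}\in QR(q)$). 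These are not ``finitely many exceptional values of $\beta$ for which a branch equation gains or loses a solution''; they are residue conditions that partition $NQR(q)$ into classes of size roughly $q/8$, and the cycle count genuinely differs from class to class. So your fallback --- discard finitely many bad $\beta$ using $\tfrac{q-1}{2}\ge3$ --- cannot work. What is actually needed, and what the paper supplies, is (a) identification of the correct class, namely $\beta+1,\beta-1\in NQR(q)$, $\beta^2+1\in QR(q)$ and $\beta\notin\{2,2^{-1}\}$, under which exactly two of the five branches fire for each residue class of $i$ and the resulting cycles are checked to coincide or differ appropriately, and (b) a separate nonemptiness argument for that class (the cyclotomic argument of Lemma \ref{lemma:conjunto_no_vacio}). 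Neither ingredient appears in your proposal, and the ``auxiliary relations $\beta^2-\beta+1=0$, $\beta^3=1$, $\beta^4=1$'' you cite are not the ones that govern the count here.

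Two smaller points. First, for the cycles through $\infty$ the relation obtained is $2\beta^{\pm1}=1$; since $q\equiv3\pmod 8$ makes $2$, and hence $2^{-1}$, a non-residue, $\beta=2$ and $\beta=2^{-1}$ are perfectly admissible choices of $\beta$, so ``$2\in NQR(q)$'' does not rule these cases out --- one must exclude $\beta\in\{2,2^{-1}\}$ by hypothesis, as the paper does. Second, your $s=0$ case is a real problem for your plan as written: the finite part of $F_0\cup G_0$ is a disjoint union of cycles of length $\mathrm{ord}(\beta)$, and $\mathrm{ord}(\beta)=4$ would force $\beta^2=-1$, impossible for $q\equiv3\pmod4$; so $F_0\cup G_0$ contains \emph{zero} $4$-cycles and the count cannot be ``pinned to $2$ in every case.'' The paper tacitly restricts attention to $F_g\cup G_h$ with $g\ne h$; if you insist on including $s=0$, you must either adopt the same convention or accept that the literal definition cannot be met.
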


%%%%%%%%%%%%%%%%%%%%%%%%%%%%%%%%%%%%%%%%%%%%%%%%%%%%%%%%%%%%%%%%%%%%%%%%%%%%%%%%%%%%%%%%%%%%%%%%%%%%%%%%%%%%%%%%%%%%%%%%
%%%%%%%%%%%%%%%%%%%%%%%%%%%%%%%%%%%%%%%%%%%%%%%%%%%%%%%%%%%%%%%%%%%%%%%%%%%%%%%%%%%%%%%%%%%%%%%%%%%%%%%%%%%%%%%%%%%%%%%%
%%%%%%%%%%%%%%%%%%%%%%%%%%%%%%%%%%%%%%%%%%%%%%%%%%%%%%%%%%%%%%%%%%%%%%%%%%%%%%%%%%%%%%%%%%%%%%%%%%%%%%%%%%%%%%%%%%%%%%%%%%
%%%%%%%%%%%%%%%%%%%%%%%%%%%%%%%%%%%%%%%%%%%%%%%%%%%%%%%%%%%%%%%%%%%%%%%%%%%%%%%%%%%%%%%%%%%%%%%%%%%%%%%%%%%%%%%%%%%%%%%%%%%%
\section{$(1,C_4)$ one-factorizations of complete graph}\label{sec:main}
Let $n\geq11$ be an odd number. A one-factorization $\mathcal{F}=\{F_1,F_2\ldots,F_n\}$ of $K_{n+1}$ is \emph{uniform} (also called \emph{semi-regular}), if for any $F_i,F_j,F_k,F_m\in\mathcal{F}$ such that $i\neq j$ and $k\neq m$ satisfy that $F_i\cup F_j$ and $F_k\cup F_m$ generate the same cycle structure. Rosa in \cite{Rosa} shows several examples of infinite families of uniform one-factorizations of the complete graph $K_{2n}$. An important example of uniform one-factorization of complete graph $K_{2n}$ is the one-factorization generated by the Mullin-Nemeth strong starters \cite{MR0249314}: Let $p$ be an odd prime and let $m$ be a positive integer such that $3<p^m=2t+1$, where $t$ is odd such that $p^m\equiv3$ (mod 4). If $r$ is a primitive root in $\mathbb{F}_{p^m}$, then $$S=\{\{r^0,r\},\{r^2,r^3\},\ldots,\{r^{2t-2},r^{2t-1}\}\}$$ is a strong starter for $\mathbb{F}_{p^m}$, see for instance \cite{Avila,MR0249314}.

The following proof is completely analogous to the proof of Theorem 2.1 given Anderson in \cite{MR0364013}.

\begin{teo}\label{theorem:uniforme}
Let $q\equiv3$ (mod 4) be an odd prime power $(q\neq3)$ and let $y\in QR(q)$ be a generator of $QR(q)$. If $\beta\in NQR(q)\setminus\{-1\}$ then the cycle structure of $F_0\cup F_0+y$ is the same as the cycle structure of $(F_0+g)\cup(F_0+h)$, for every different elements $g,h\in\mathbb{F}_q$. That is, the one-factorization generated by the starter $S_\beta$ is uniform.
\end{teo}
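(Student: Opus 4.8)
The plan is to exploit the group-translation structure of the one-factorization. The one-factors are $F_g = F_0 + g$ for $g \in \mathbb{F}_q$ (together with the edge $\{\infty, g\}$), so translation by any $h \in \mathbb{F}_q$ is a graph automorphism sending $F_g \mapsto F_{g+h}$ and fixing $\infty$. Consequently $(F_0+g)\cup(F_0+h)$ is the translate by $g$ of $F_0 \cup (F_0 + (h-g))$, and translation preserves cycle structure. This reduces the claim to showing that $F_0 \cup (F_0 + d)$ has the same cycle structure for every nonzero $d \in \mathbb{F}_q$, i.e. that the cycle structure depends only on $d$ being nonzero, not on which nonzero element it is. So the first step I would carry out is to record this reduction explicitly.

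Next I would use the multiplicative structure coming from the particular starter $S_\beta = \{\{x, x\beta\} : x \in QR(q)\}$. Scaling by any $c \in \mathbb{F}_q^*$ is again a graph automorphism of $K_{q+1}$ (fixing $\infty$), and it sends the edge $\{x, x\beta\}$ to $\{cx, cx\beta\}$. If $c \in QR(q)$, then $cx$ ranges over $QR(q)$ as $x$ does, so multiplication by $c$ maps $S_\beta$ onto itself, hence maps $F_0$ onto $F_0$; and it maps $F_0 + d$ onto $F_0 + cd$. Therefore $F_0 \cup (F_0 + d)$ and $F_0 \cup (F_0 + cd)$ are isomorphic (via the scaling map) and thus have the same cycle structure, for any $c \in QR(q)$. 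This shows the cycle structure of $F_0 \cup (F_0 + d)$ depends only on the coset of $d$ in $\mathbb{F}_q^*/QR(q)$, i.e. only on whether $d \in QR(q)$ or $d \in NQR(q)$ — two cases a priori.

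The crux is then to collapse those two cases into one, and here the hypothesis that $y \in QR(q)$ is a \emph{generator} of the cyclic group $QR(q)$, and more importantly that $q \equiv 3 \pmod 4$, comes in. When $q \equiv 3 \pmod 4$ we have $-1 \in NQR(q)$. The key observation is that the map $x \mapsto -x$ sends $F_0$ to $-F_0$, which is the one-factor of $K_{q+1}$ generated by the starter $-S_\beta = \{\{-x, -x\beta\} : x \in QR(q)\}$; since $-1 \in NQR(q)$, the set $\{-x : x \in QR(q)\} = NQR(q)$, so $-S_\beta = \{\{z, z\beta\} : z \in NQR(q)\}$. Together with $S_\beta$ these edges $\{\{w, w\beta\}: w \in \mathbb{F}_q^*\}$ partition the "difference classes" appropriately, and one checks that $-F_0 = F_0 + a$ for a suitable $a$ (equivalently, $S_\beta \cup \{\{\infty,0\}\}$ and its negative differ by a translation) — this is essentially the statement that $S_\beta$ and $-S_\beta$ generate the same one-factorization up to translation, which one can verify from $x\beta = (-x)(-\beta)$ and the fact that $-\beta$ relates to $\beta$ by a quadratic-residue factor when an appropriate congruence holds. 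I would therefore prove: there exists $a \in \mathbb{F}_q^*$ and a scaling $c \in QR(q)$ with $-x \mapsto c(x) + a$ carrying $F_0$ to $F_0$ and $F_0 + d$ to $F_0 + d'$ where $d'$ lies in the opposite residue class from $d$; composing negation with scaling then gives an isomorphism between the quadratic-residue case and the non-quadratic-residue case, finishing the proof.

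The main obstacle I anticipate is precisely this last step — verifying that negation (possibly composed with a residue scaling) carries the one-factorization to a translate of itself and swaps the two residue classes for $d$. This is exactly where Anderson's argument for the analogous Theorem 2.1 does the real work, and it is the place where the arithmetic of $QR(q)$, the choice of $\beta \in NQR(q)\setminus\{-1\}$, and the congruence $q \equiv 3 \pmod 4$ all have to be used in concert; the group-automorphism reductions in the first two paragraphs are routine bookkeeping by comparison. I would model the computation closely on Anderson's proof, translating his argument for the relevant strong starter into the present $S_\beta$ notation.
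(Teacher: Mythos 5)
Your first two reductions are correct and are the standard bookkeeping: translation reduces everything to $F_0\cup F_d$ with $d\neq0$, and multiplication by $c\in QR(q)$ fixes $F_0$ (it permutes the edges $\{x,x\beta\}$, $x\in QR(q)$, and fixes $\{\infty,0\}$) while sending $F_d$ to $F_{cd}$, so the cycle structure depends only on the coset of $d$ modulo $QR(q)$. The gap is in your third step. The identity $-F_0=F_0+a$ that you propose to ``check'' is false: $-F_0$ contains the edge $\{\infty,0\}$, so $-F_0=F_a$ would force $a=0$, and $-F_0=F_0$ would require each edge $\{x,x\beta\}$ with $x\in QR(q)$ to coincide with some $\{z,z\beta\}$ with $z\in NQR(q)$, i.e.\ $\beta^2=1$, which is excluded since $\beta\in NQR(q)\setminus\{-1\}$. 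In fact $-F_0$ is exactly the one-factor $G_0$ built from the starter $\{\{z,z\beta\}:z\in NQR(q)\}$, which Section~3 of the paper uses as a one-factor of a factorization \emph{orthogonal} to $\mathcal{F}$ --- it is not a translate of $F_0$. The same objection rules out any map of the form $x\mapsto -cx+a$ with $c\in QR(q)$: preserving the unique edge at $\infty$ forces $a=0$, and the resulting map carries $F_0$ to $G_0$, not to $F_0$.

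The bridge between the two residue classes is much simpler and does not require negation to fix $F_0$ at all: $F_0\cup F_d$ is the same graph as $F_d\cup F_0$, and translating by $-d$ carries it to $F_{-d}\cup F_0=F_0\cup F_{-d}$. Since $q\equiv3\pmod 4$ gives $-1\in NQR(q)$, the elements $d$ and $-d$ lie in opposite cosets of $QR(q)$, so the two a priori cases from your second step coincide and the factorization is uniform. (This is essentially Anderson's argument, which the paper cites without writing out; your framework is salvageable once the faulty negation step is replaced by this observation.)
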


\begin{lema}\label{lemma:uno}
Let $q\equiv$3 (mod 4) be an odd prime power with $q\geq11$ and $\beta\in NQR(q)\setminus\{-1\}$. If $\beta^2-\beta+1=0$, then the one-factorization generated by the starter $S_\beta$ include a cycle of length four, $C_4$, with $\infty\in V(C_4)$. 
\end{lema}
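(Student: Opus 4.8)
Since the assertion only requires producing a single $C_4$ through $\infty$ somewhere in the one-factorization $\{F_\gamma:\gamma\in\mathbb{F}_q\}$, the plan is to write one down explicitly inside a union $F_0\cup F_\gamma$. Recall that the unique edge of $F_\gamma$ at $\infty$ is $\{\infty,\gamma\}$ and the unique edge of $F_0$ at $\infty$ is $\{\infty,0\}$; consequently any $4$-cycle through $\infty$ in $F_0\cup F_\gamma$ (with $\gamma\neq0$, so that $F_0,F_\gamma$ are edge-disjoint) must alternate between the two factors and take the shape $\infty-0-v-\gamma-\infty$ with $\{0,v\}\in F_\gamma$ and $\{v,\gamma\}\in F_0$. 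So the task is to pick $\gamma$ and the two defining quadratic residues so that both of those edges really occur.

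I would then do the bookkeeping. To get $\{0,v\}=\{x+\gamma,x\beta+\gamma\}$ for some $x\in QR(q)$ I take $x=-\gamma$, which gives $v=\gamma(1-\beta)$ and demands $-\gamma\in QR(q)$; since $q\equiv3\pmod{4}$ forces $-1\in NQR(q)$, this amounts to $\gamma\in NQR(q)$. To get $\{v,\gamma\}=\{x',x'\beta\}$ for some $x'\in QR(q)$ I take $x'=\gamma\beta^{-1}$, which gives $v=\gamma\beta^{-1}$ and demands $\gamma\beta^{-1}\in QR(q)$; since $\beta\in NQR(q)$ (hence $\beta^{-1}\in NQR(q)$), this again amounts to $\gamma\in NQR(q)$. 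The two expressions for $v$ agree iff $\gamma(1-\beta)=\gamma\beta^{-1}$, i.e., after cancelling $\gamma\neq0$ and multiplying by $\beta$, iff $\beta-\beta^{2}=1$ — precisely the hypothesis $\beta^{2}-\beta+1=0$. Hence for any $\gamma\in NQR(q)$ (such $\gamma$ exists since $q\geq11$) the union $F_0\cup F_\gamma$ contains the closed walk $\infty-0-\gamma\beta^{-1}-\gamma-\infty$, and the concrete choice $\gamma=\beta$ exhibits the $4$-cycle on vertices $\infty,0,1,\beta$ (using $\beta^{2}=\beta-1$ to see $\{0,1\}\in F_\beta$ and $\{1,\beta\}\in F_0$).

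It remains to check the closed walk is a genuine $C_4$: the vertices $\infty,0,\gamma\beta^{-1},\gamma$ are pairwise distinct because $\gamma\neq0$, $\gamma\beta^{-1}\neq0$, and $\gamma\beta^{-1}\neq\gamma$ (as $\beta\neq1$, since $1\in QR(q)$ while $\beta\in NQR(q)$); its four edges split as two from $F_0$ and two from $F_\gamma$, which are distinct and edge-disjoint, so we do have a $4$-cycle of the union. Note $\beta=-1$ is automatically not a concern, since $(-1)^2-(-1)+1=3\neq0$ for $q\geq11$. The only step with real content is guessing the correct pairing of residues — $x=-\gamma$ together with $x'=\gamma\beta^{-1}$ — that forces the two candidate values of $v$ to coincide exactly when $\beta^{2}-\beta+1=0$ (the mirror pairing $x=-\gamma\beta^{-1}$, $x'=\gamma$ gives the same equation); once that identity is spotted, the quadratic-residue computations collapse to the single condition $\gamma\in NQR(q)$ and everything else is routine, so I anticipate no serious obstacle.
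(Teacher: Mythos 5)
Your proof is correct and follows essentially the same route as the paper's: both identify the unique candidate $4$-cycle $\infty,0,v,\gamma$ in $F_0\cup F_\gamma$, compute the forced edge of $F_\gamma$ at $0$ and the forced edge of $F_0$ at $\gamma$, and observe that the two resulting values of $v$ coincide precisely when $\beta^2-\beta+1=0$. The only difference is that you translate by $\gamma\in NQR(q)$ and exhibit the cycle directly (so you do not need Theorem \ref{theorem:uniforme}), whereas the paper translates by a generator $i\in QR(q)$ and invokes uniformity; the underlying computation is the same.
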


\begin{proof}
	Let $F_i=\{\{\infty,i\}\}\cup\{\{x+i,x\beta+i\}: x\in QR(q)\}$, for $i\in\mathbb{F}_q$. Then
	$\mathcal{F}=\{F_i:i\in\mathbb{F}_q\}$ is a one-factorization of the complete graph on $\mathbb{F}_q\cup \{\infty\}$. By Theorem \ref{theorem:uniforme} it is sufficient to prove that $F_0\cup F_i$, where $i\in QR(q)$ is a generator of $QR(q)$, include a cycle of length four, $C_4$, such that $\infty\in V(C_4)$. 
	
	Hence, $\{\infty,0\},\{\infty,i\}\in E(C_4)$, where $\{\infty,0\}\in F_0$ and $\{\infty,i\}\in F_i$. Then $\{-i+i,-i\beta^{-1}+i\},\{i,i\beta\}\in E(C_4)$. Therefore, $F_0\cup F_i$ include the following cycle of length 4:  $$\{\infty,0\}\{0,-i\beta^{-1}+i\}\{i,i\beta\}\{\infty,i\}=C_4,$$which implies that $i\beta^{-1}+i=i\beta$. Hence, $\beta^2-\beta+1=0$.
\end{proof}

\begin{lema}\label{lemma:3QR}
	If $q\equiv-1$ (mod 12) is an odd prime power with $q\geq11$ and $\beta\in NQR(q)\setminus\{-1\}$, then $\beta^2-\beta+1\neq0$.	
\end{lema}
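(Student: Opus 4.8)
The plan is to argue by contradiction using the observation that the roots of $x^2-x+1$ are exactly the elements of multiplicative order $6$. Assume $\beta^2-\beta+1=0$. Since $\beta\neq-1$, the factor $\beta+1$ is a unit, so multiplying the relation $\beta^3+1=(\beta+1)(\beta^2-\beta+1)$ by nothing more than rearranging gives $\beta^3=-1$, and hence $\beta^6=1$. Therefore the order of $\beta$ in $\mathbb{F}_q^*$ divides $6$.

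The next step is to pin down that this order is exactly $6$. If $\beta$ had order $1$ then $\beta=1$, but then $\beta^2-\beta+1=1\neq0$; if $\beta$ had order $2$ then $\beta=-1$ (using that $q$ is odd, so the only involution in $\mathbb{F}_q^*$ is $-1$), which is excluded by hypothesis; and if $\beta$ had order $3$ then $\beta^3=1$, contradicting $\beta^3=-1$ because $\mathrm{char}(\mathbb{F}_q)\neq2$ (here one uses that $q\equiv-1\pmod{12}$ forces $q$ to be coprime to $12$, hence odd and of characteristic not $3$). So $\beta$ has order precisely $6$, and Lagrange's theorem gives $6\mid|\mathbb{F}_q^*|=q-1$.

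Finally one derives the contradiction from the congruence. Since $q\equiv-1\pmod{12}$, we get $q-1\equiv-2\equiv10\pmod{12}$, hence $q-1\equiv4\pmod 6$ and in particular $6\nmid q-1$. This contradicts $6\mid q-1$, so no such $\beta$ exists, i.e.\ $\beta^2-\beta+1\neq0$. There is no genuine obstacle here; the only thing to keep an eye on is that the hypothesis on $q$ is used in two ways — once (implicitly) to ensure $\mathrm{char}(\mathbb{F}_q)\notin\{2,3\}$ so that $\Phi_6(x)=x^2-x+1$ really does cut out order-$6$ elements with no root equal to $\pm1$, and once to contradict $6\mid q-1$. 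Equivalently, the statement is just the assertion that $\Phi_6$ has a root in $\mathbb{F}_q$ if and only if $6\mid q-1$, which fails for $q\equiv-1\pmod{12}$.
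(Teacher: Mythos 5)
Your proof is correct, but it follows a genuinely different route from the paper's. You identify $x^2-x+1$ as $\Phi_6$, show any root $\beta$ would be a primitive $6$th root of unity (ruling out orders $1$, $2$, $3$ using $\mathrm{char}(\mathbb{F}_q)\notin\{2,3\}$ and $\beta\neq-1$), and then contradict $6\mid q-1$ via $q\equiv 2\pmod 3$. The paper instead stays entirely inside the quadratic-residue calculus it uses throughout: from $\beta^2-\beta+1=0$ it derives $\beta(\beta-2)=-(\beta+1)$, deduces that $(\beta-2)(\beta+1)=-(\beta+1)^2\beta^{-1}$ must be a quadratic residue (using $\beta,-1\in NQR(q)$), and then observes $(\beta-2)(\beta+1)=-3\in NQR(q)$ because $q\equiv\pm1\pmod{12}$ forces $3\in QR(q)$ while $-1\in NQR(q)$. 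Your argument is more conceptual and actually proves a stronger statement --- $x^2-x+1$ has no root whatsoever in $\mathbb{F}_q$ when $q\equiv-1\pmod{12}$, with the hypotheses $\beta\in NQR(q)$ and $\beta\neq-1$ playing no essential role --- and it only uses $q\equiv2\pmod3$ rather than the full congruence. The paper's computation, by contrast, genuinely uses $\beta\in NQR(q)$ and the quadratic characters of $-1$ and $3$, which keeps it uniform in style with the surrounding lemmas but yields only the stated, weaker conclusion. Both are valid; yours also explains the paper's subsequent remark that roots do exist when $q\equiv1\pmod3$, since that is exactly the condition $6\mid q-1$ for odd $q$.
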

\begin{proof}
Suppose that there exists $\beta\in NQR(q)\setminus\{-1\}$ such that $\beta^2-\beta+1=0$. Notice that$$\beta(\beta-2)=\beta^2-2\beta=-(\beta+1).$$ Given that $\beta\in NQR(q)$ and $-1\in NQR(q)$, then $(\beta-2)(\beta+1)\in QR(q)$. Furthermore, $(\beta-2)(\beta+1)=-3$, which implies that $(\beta-2)(\beta+1)\in NQR(q)$ (since $q\equiv-1$ mod 12 implies that $3\in QR(q)$), which is a contradiction.		
\end{proof}

It is not hard to check that, if $q\equiv1$ (mod 3) is an odd prime power with $q\geq7$, then there exist $\beta\in NQR(q)\setminus\{-1\}$ such that $\beta^2-\beta+1=0$. Hence, we have the following:

\begin{teo}\label{theorem:noinf}
Let $q\equiv$3 (mod 8) be an odd prime power with $q\geq11$ such that $3\in NQR(q)$ (such that $q\equiv1$ (mod 3)). If $\beta\in NQR(q)\setminus\{-1\}$ is such that $\beta^2-\beta+1=0$, then the one-factorization generated by the starter $S_\beta$ is $(1,C_4)$. 
\end{teo}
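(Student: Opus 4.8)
The plan is to combine the structural results already assembled in this section. By Lemma~\ref{lemma:uno}, the one-factorization generated by $S_\beta$ contains a $C_4$ through $\infty$ precisely when $\beta^2-\beta+1=0$; conversely, the remark preceding Theorem~\ref{theorem:noinf} guarantees that such a $\beta\in NQR(q)\setminus\{-1\}$ actually exists when $q\equiv1\pmod 3$. So fix this $\beta$. By Theorem~\ref{theorem:uniforme} the one-factorization $\mathcal{F}=\{F_i:i\in\mathbb{F}_q\}$ is uniform, meaning every pair $F_i\cup F_j$ (with $i\neq j$) has the same cycle structure as $F_0\cup F_y$ for a generator $y$ of $QR(q)$. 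Hence to prove that $\mathcal{F}$ is $(1,C_4)$ it suffices to show that $F_0\cup F_y$ contains \emph{exactly one} $C_4$.

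The first step is to recall from Lemma~\ref{lemma:uno} (and its proof) that $F_0\cup F_y$ does contain at least one $4$-cycle, namely the one through $\infty$: $\{\infty,0\}\{0,-y\beta^{-1}+y\}\{y,y\beta\}\{\infty,y\}$. The heart of the argument is then the uniqueness: I would show that this is the \emph{only} $C_4$ in $F_0\cup F_y$. Any $4$-cycle in $F_0\cup F_y$ alternates edges of $F_0$ and $F_y$, so it has one of two forms — either it passes through $\infty$ (using the edges $\{\infty,0\}$ and $\{\infty,y\}$) or it lies entirely in $\mathbb{F}_q$ and uses two edges $\{x,x\beta\},\{x',x'\beta\}$ from $F_0$ together with two translated edges $\{u+y,u\beta+y\},\{u'+y,u'\beta+y\}$ from $F_y$. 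For the $\infty$-case the cycle is forced and the computation in Lemma~\ref{lemma:uno} shows it exists exactly when $\beta^2-\beta+1=0$, giving exactly one such cycle. For the finite case I would set up the adjacency conditions: a $C_4$ of the form $a\,b\,c\,d$ with $\{a,b\},\{c,d\}\in F_0$ and $\{b,c\},\{d,a\}\in F_y$ translates into a small system of equations in $\mathbb{F}_q$ (expressing that the four vertices are $\{x,x\beta\}$ and $\{x',x'\beta\}$ and also, after subtracting $y$, two quadratic-residue-indexed edges). Solving this system should, after using $\beta^2-\beta+1=0$ together with the hypotheses $q\equiv3\pmod 8$ (so that $-1,2\in NQR(q)$, $2$ being the relevant fact here via $q\equiv3\pmod 8$) and $3\in NQR(q)$, force a contradiction unless the putative cycle degenerates (i.e.\ two of its vertices coincide). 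That is, no genuine $4$-cycle avoiding $\infty$ can occur.

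The main obstacle I anticipate is precisely this finite-case case analysis: writing down the alternating-$C_4$ equations correctly (being careful about the two cyclic orderings and about which vertex of each edge is which), reducing them using the minimal polynomial relation $\beta^2=\beta-1$ to eliminate $\beta$, and then showing the resulting residue condition is incompatible with the hypotheses on $q$. The quadratic-character bookkeeping is where the arithmetic conditions $q\equiv3\pmod 8$ and $q\equiv-1\pmod{12}$ must both get used — the former to control $\pm1$ and $2$, the latter (via $3\in QR(q)$, used already in Lemma~\ref{lemma:3QR}, or its complement as needed) to control $3$ — so I would organize the computation so that each hypothesis is invoked at an identifiable point. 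A convenient simplification is to exploit uniformity and the multiplicative structure: because $QR(q)=\langle y\rangle$ and $S_\beta$ is multiplicatively homogeneous, one may rescale so that one of the edges is $\{1,\beta\}$, cutting down the number of free parameters before the case split. Once uniqueness of the $C_4$ in $F_0\cup F_y$ is established, Theorem~\ref{theorem:uniforme} propagates it to every pair $F_i\cup F_j$, and the conclusion that $\mathcal{F}$ is a $(1,C_4)$ one-factorization of $K_{q+1}$ follows immediately.
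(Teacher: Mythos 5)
Your overall strategy is the same as the paper's: reduce to a single pair $F_0\cup F_i$ via the uniformity result (Theorem~\ref{theorem:uniforme}), obtain the unique $4$-cycle through $\infty$ from Lemma~\ref{lemma:uno}, and then rule out any $4$-cycle avoiding $\infty$ by analyzing the alternating-edge equations. The problem is that this last step is not carried out: you write that solving the system ``should \ldots force a contradiction,'' and you explicitly list it as the main anticipated obstacle. That step \emph{is} the content of the theorem --- everything before it is a routine assembly of Lemma~\ref{lemma:uno} and Theorem~\ref{theorem:uniforme} --- so as it stands the proposal is a plan with the decisive computation missing. In the paper this computation takes the form of the single relation
\begin{equation*}
a\bigl(\beta^{t_1+1}-\beta^{t_2+t_3}\bigr)+i\bigl(\beta^{t_2+t_3}-\beta^{t_3}-\beta^{t_1}+1\bigr)=0,\qquad t_1,t_2,t_3\in\{-1,1\},
\end{equation*}
where the exponents $t_1,t_2,t_3$ record the quadratic characters of $a\beta-i$, $a-i$ and $(a-i)\beta^{t_2}+i$; the degenerate possibility $t_1+1=t_2+t_3$ is dismissed, and each of the five remaining sign patterns is reduced, using $\beta^2=\beta-1$, to an identity such as $2a\beta^2=i(\beta-1)$ or $2\beta(a-i)=a$ whose two sides lie in different cosets of $QR(q)$ (here $2\in NQR(q)$ from $q\equiv3\pmod 8$ and $-1\in NQR(q)$ from $q\equiv3\pmod 4$ do the work). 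Without exhibiting these five contradictions, the claim that no $C_4$ avoids $\infty$ is unsupported.

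A secondary point: you invoke the hypothesis $q\equiv-1\pmod{12}$, but that is not a hypothesis of this theorem; here the assumption is $3\in NQR(q)$, equivalently $q\equiv1\pmod 3$, which is exactly what makes the discriminant $-3$ of $x^2-x+1$ a square so that the required $\beta$ exists (this is the complement of the situation in Lemma~\ref{lemma:3QR}, and the case $q\equiv-1\pmod{12}$ is handled separately by Lemma~\ref{lemma:final}). In fact, once $\beta$ with $\beta^2-\beta+1=0$ is given, the case analysis uses only $\beta^2=\beta-1$ and the facts $-1,2\in NQR(q)$; the condition on $3$ plays no further role. Your suggested rescaling so that one $F_0$-edge is $\{1,\beta\}$ is a reasonable simplification, but it does not remove the need to run the character case split.
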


\begin{proof}
	Let $F_i=\{\{\infty,i\}\}\cup\{\{x+i,x\beta+i\}: x\in QR(q)\}$, for $i\in\mathbb{F}_q$. Then
	$\mathcal{F}=\{F_i:i\in\mathbb{F}_q\}$ is a one-factorization of the complete graph on $\mathbb{F}_q\cup \{\infty\}$. By Theorem \ref{theorem:uniforme} it is sufficient to prove that $F_0\cup F_i$, where $i\in QR(q)$ is a generator of $QR(q)$, include an unique cycle of length four, $C_4$. 
	
	Suppose that there is a $\beta\in NQR(q)\setminus\{-1\}$ such that $\beta^2-\beta+1=0$ and $S_\beta$ include a cycle of length four, $C_4$, with $\infty\neq V(C_4)$ (by Lemma \ref{lemma:beta2}). Given that $E(C_4)\cap F_0\neq\emptyset$, then there is an $a\in V(C_4)\cap QR(q)$ such that
	\begin{center}
		$\{a,a\beta\}\in F_0$ and $\{a\beta,(a\beta-i)\beta^{t_1}+i\}\in F_i$,
	\end{center}
	with $t_1=1$ if $a\beta-i\in QR(q)$, and $t_1=-1$ if $a\beta-i\in NQR(q)$. On the other hand
	\begin{center}
		$\{a,(a-i)\beta^{t_2}+i\}\in F_i$ and $\{(a-i)\beta^{t_2}+i,((a-i)\beta^{t_2}+i)\beta^{t_3}\}\in F_0$,	
	\end{center}
	whit $t_2=1$ if $a-i\in QR(q)$, and $t_2=-1$ if $a-i\in NQR(q)$, and $t_3=1$ if $(a-i)\beta^{t_2}+i\in QR(q)$, and $t_3=-1$ if $(a-i)\beta^{t_2}+i\in NQR(q)$. Therefore
	
	\begin{equation*}%\label{equation}
	a(\beta^{t_1+1}-\beta^{t_2+t_3})+i(\beta^{t_2+t_3}-\beta^{t_3}-\beta^{t_1}+1)=0,
	\end{equation*}
	where $t_1,t_2,t_3\in\{-1,1\}$. Notice that, if $\{a,b\}\{b,c\}\{a,d\}\{d,c\}=C_4$, then $|V(C_4)\cap QR(q)|=2$. It is not hard to check that, if $t_1+1=t_2+t_3$, where $t_1,t_2,t_3\in\{-1,1\}$, then $\beta^{t_2+t_3}-\beta^{t_3}-\beta^{t_1}+1\neq0$, which implies that the one-factorization generated by the starter $S_\beta$ is $C_4$-free. Suppose that $t_1+1\neq t_2+t_3$.
	
	\begin{itemize}
		\item [case (i)] Suppose that $t_1=t_2=1$ and $t_3=-1$. In this case $(a-i)\beta+i\in QR(q)$ and $(a\beta-i)\beta+i\in NQR(q)$. Since $\{(a-i)\beta+i,(a\beta-i)\beta+i\}\in F_0$, then $(a\beta-i)\beta+i-((a-i)\beta+i)=x(\beta-1)$, where $x=-a\beta$. Hence $(a-i)\beta+i=-a\beta$ and $(a\beta-1)\beta+i=-a\beta^2$, since $\{x,x\beta\}\in F_0$, where $x\in QR(q)$. But $(a\beta-1)\beta+i=-a\beta^2$ implies that $2a\beta^2=i(\beta-1)$, which is a contradiction, since $2a\beta^2\in NQR(q)$ (given that $2\in NQR(q)$) and $i(\beta-1)\in QR(q)$ (given that $\beta^2-\beta+1=0$). Hence, the one-factorization generated by the starter $S_\beta$ is ($0,C_4$), $C_4$-free.
		
		\
		
		\item [case (ii)] Suppose that $t_1=t_3=1$ and $t_2=-1$. In this case $(a-i)\beta^{-1}+i\in QR(q)$ and $(a\beta-i)\beta+i\in NQR(q)$. Since 
		$\{x,x\beta\}\in F_0$, where $x\in QR(q)$, then $x=(a-i)\beta^{-1}+i$ and $x\beta=(a\beta-i)\beta+i$. Hence, $(a-i)+i\beta=(a\beta-i)\beta+i$, which implies that $2i\beta=a(\beta^2+\beta)=a(2\beta-1)$ (since $\beta^2-\beta+1=0$), which implies that $2\beta(a-i)=a$, a contradiction (since $a-i\in NQR(q)$). Hence, the one-factorization generated by the starter $S_\beta$ is $C_4$-free.
		
		\
		
		\item [case (iii)] Suppose that $t_2=t_3=1$ and $t_1=-1$. In this case $(a-i)\beta+i\in QR(q)$ and $(a\beta-i)\beta^{-1}+i\in NQR(q)$. Since 
		$\{x,x\beta\}\in F_0$, where $x\in QR(q)$, then $x=(a-i)\beta+i$ and $x\beta=(a\beta-i)\beta^{-1}+i$. Hence, $(a-i)\beta^2+i\beta=(a\beta-i)\beta^{-1}+i$, which implies that $a(\beta-1)=a\beta-i$, a contradiction (since $a\beta-i\in NQR(q)$). Hence, the one-factorization generated by the starter $S_\beta$ is $C_4$-free.
		
		\
		
		\item [case (iv)] Suppose that $t_1=1$ and $t_2=t_3=-1$. Then $$a(\beta^2-\beta^{-2})+i(\beta^{-2}-\beta^{-1}-\beta+1)=0$$ Given that $\beta^2-\beta+1=0$ (implying that $\beta+\beta^{-1}=1$), then $a(\beta^2-\beta^{-2})+i\beta^{-2}=0$, which implies that $a\beta^2=\beta^{-2}(a-i)$, a contradiction (since $a-i\in NQR(q)$). Hence, the one-factorization generated by the starter $S_\beta$ is $C_4$-free.
		
		\
		
		\item [case (v)]  Suppose that $t_1=t_2=t_3=-1$. Then $$a(1-\beta^{-2})+i(\beta^{-2}-2\beta^{-1}+1)=0,$$which implies that
		$a(\beta^2-1)+i(\beta^2-2\beta+1)=a(\beta-2)-i\beta=0$ (since $\beta^2-\beta+1=0$). Hence, $\beta(a-i)=2a$, which is a contradiction (since $a-i\in NQR(q)$ and $2\in NQR(q)$). Hence, the one-factorization generated by the starter $S_\beta$ is $C_4$-free.
	\end{itemize}
	Hence, by Lemma \ref{lemma:uno} the one-factorization generated by the starter $S_\beta$ is $(1,C_4)$.
\end{proof}	

\begin{example}
Let $q=19$. A primitive root of $\mathbb{F}_{19}$ is $r=2$ and $i=r^2=4$ is a generator of $QR(19)$. Furthermore, the set of non-quadratic residues of $\mathbb{F}_{19}$ is: $$\{2,3,8,10,12,13,14,15,18\}.$$ Hence, if $\beta=8$, then $\beta^2-\beta+1=0$, which implies that the one-factorization generated by the starter $S_8$ include an unique cycle $C_4$, with $V(C_4)=\{\infty,0,4,13\}$. 
\end{example}

\begin{lema}\label{lemma:final}
Let $q\equiv$3 (mod 8) be an odd prime power such that $q\equiv-1$ (mod 12). If there exist $\beta\in NQR(q)\setminus\{-1\}$ such that $\beta-1\in NQR(q)$ and $\beta^2+1\in QR(q)$, then the one-factorization generated by the starter $S_\beta$ is $(1,C_4)$.  
\end{lema}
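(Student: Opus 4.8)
The plan is to mirror the proof of Theorem~\ref{theorem:noinf}. By Theorem~\ref{theorem:uniforme} the one-factorization $\mathcal{F}=\{F_i:i\in\mathbb{F}_q\}$ generated by $S_\beta$ is uniform, so it suffices to show that $F_0\cup F_i$ contains exactly one cycle of length four for $i\in QR(q)$ a generator of $QR(q)$. First I would rule out a $C_4$ through $\infty$: by Lemma~\ref{lemma:3QR} (using $q\equiv-1\pmod{12}$) we have $\beta^2-\beta+1\neq0$ — equivalently $\beta^2+1\neq\beta$, which is anyway immediate from $\beta^2+1\in QR(q)$ and $\beta\in NQR(q)$ — so Lemma~\ref{lemma:uno} forbids $\infty\in V(C_4)$. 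Thus every $C_4$ of $F_0\cup F_i$ lies in $\mathbb{F}_q$, and, exactly as in Theorem~\ref{theorem:noinf}, writing such a cycle as $\{a,a\beta\}\{a\beta,c\}\{c,c'\}\{c',a\}$ with $a\in QR(q)$ yields
\begin{equation*}
a(\beta^{t_1+1}-\beta^{t_2+t_3})+i(\beta^{t_2+t_3}-\beta^{t_3}-\beta^{t_1}+1)=0,\qquad t_1,t_2,t_3\in\{-1,1\},
\end{equation*}
together with the residue conditions defining $t_1,t_2,t_3$ and requiring $a\in QR(q)$. The three patterns with $t_1+1=t_2+t_3$ are excluded as before, since the coefficient of $i$ is then $(\beta-1)^2$, $-(\beta-1)^2$ or $2(\beta-1)$ up to a unit and $\beta\neq1$; this leaves the five cases (i)--(v) from the proof of Theorem~\ref{theorem:noinf}.

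For each of these five patterns the displayed relation is linear in $a$ with nonzero leading coefficient (here one needs $\beta^2\neq1$ and $\beta^4\neq1$, the latter because $\beta^2\neq-1$ as $\beta^2+1\in QR(q)$), hence has a unique solution $a$. Substituting it into the four residue conditions and using only $i,\beta^2+1\in QR(q)$ and $\beta,\beta-1,2,-1\in NQR(q)$, each of the quantities $a$, $a\beta-i$, $a-i$, $(a-i)\beta^{t_2}+i$ turns out to be a product of known residues times a single power of $\beta+1$; therefore in each case all four conditions reduce to one and the same constraint on the Legendre symbol of $\beta+1$. Carrying this out gives: case~(iv) is self-contradictory ($a\in QR(q)$ needs $\beta+1\in QR(q)$, while $a-i\in NQR(q)$ needs $\beta+1\in NQR(q)$), so it never occurs; cases~(i) and~(v) occur exactly when $\beta+1\in QR(q)$; and cases~(ii) and~(iii) occur exactly when $\beta+1\in NQR(q)$.

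It remains to see that this yields exactly one $C_4$, not two. A $C_4$ of $F_0\cup F_i$ avoiding $\infty$ has exactly two vertices in $QR(q)$ (its two $F_0$-edges are $\{x,x\beta\}$ with $x\in QR(q)$), and either of them may serve as the base vertex $a$, so each such cycle is produced by exactly two of the five patterns. A direct check shows that the cycle obtained in case~(i) with base vertex $a$ coincides with the cycle obtained in case~(v) with base vertex its other $QR$-vertex — indeed the vertex $c$ of the case~(i) cycle equals the solution $a$ of case~(v) — and likewise cases~(ii) and~(iii) describe the same cycle. Hence, regardless of the value of the Legendre symbol of $\beta+1$, the $C_4$'s of $F_0\cup F_i$ form a single pair of base vertices, i.e.\ there is exactly one $C_4$; a routine verification that its four vertices are distinct and different from $0$ and $i$ (using $\beta\neq1$ and $\beta^2+1\neq0$) completes the argument, and by Theorem~\ref{theorem:uniforme} the one-factorization is $(1,C_4)$. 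The bulk of the work — and the main source of sign slips — is the five-case computation of the second paragraph; the conceptual point to get right is the pairing of cases in the third, without which one could only conclude ``$(1,C_4)$ or $(2,C_4)$''.
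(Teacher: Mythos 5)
Your proposal is correct and follows essentially the same route as the paper: reduce to $F_0\cup F_i$ by uniformity, exclude $\infty$ via Lemma \ref{lemma:3QR}, derive the relation in $t_1,t_2,t_3$, and resolve the five cases with $t_1+1\neq t_2+t_3$ according to the quadratic character of $\beta+1$. The one place you diverge is the case $t_1=t_2=t_3=-1$: the paper dismisses it with the claimed contradiction $i(\beta-1)^2=0$, but that step is an algebra slip --- the closure relation there actually reduces to $a(\beta+1)=-i(\beta-1)$, which is realizable precisely when $\beta+1\in QR(q)$, and your observation that it then reproduces the case $t_1=t_2=1,\,t_3=-1$ cycle from its other $QR$ base vertex (each $\infty$-avoiding $C_4$ being counted once per $QR$-vertex) is what genuinely keeps the count at exactly one; so on this point your treatment is the more accurate of the two.
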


\begin{proof}
	Let $F_i=\{\{\infty,i\}\}\cup\{\{x+i,x\beta+i\}: x\in QR(q)\}$, for $i\in\mathbb{F}_q$. Then
	$\mathcal{F}=\{F_i:i\in\mathbb{F}_q\}$ is a one-factorization of the complete graph on $\mathbb{F}_q\cup \{\infty\}$. Then the one-factorization generated by the starter $S_\beta$ include a cycle of length four, $C_4$, with $\infty\not\in V(C_4)$ (by Lemma \ref{lemma:3QR} and Theorem \ref{theorem:noinf}). 
	
	By Theorem \ref{theorem:uniforme}, if $i\in QR(q)$ is a generator of $QR(q)$, then the cycle structure $F_0\cup F_i$ is the same as the cycle structure of the union of any two distinct one-factors, say $F_g\cup F_h$. Given that $E(C_4)\cap F_0\neq\emptyset$, then there is an $a\in V(C_4)\cap QR(q)$ such that
	\begin{center}
		$\{a,a\beta\}\in F_0$ and $\{a\beta,(a\beta-i)\beta^{t_1}+i\}\in F_i$,
	\end{center}
	with $t_1=1$ if $a\beta-i\in QR(q)$, and $t_1=-1$ if $a\beta-i\in NQR(q)$. On the other hand
	\begin{center}
		$\{a,(a-i)\beta^{t_2}+i\}\in F_i$ and $\{(a-i)\beta^{t_2}+i,((a-i)\beta^{t_2}+i)\beta^{t_3}\}\in F_0$,	
	\end{center}
	whit $t_2=1$ if $a-i\in QR(q)$, and $t_2=-1$ if $a-i\in NQR(q)$, and $t_3=1$ if $(a-i)\beta^{t_2}+i\in QR(q)$, and $t_3=-1$ if $(a-i)\beta^{t_2}+i\in NQR(q)$. Therefore
	
	\begin{equation*}%\label{equation}
	a(\beta^{t_1+1}-\beta^{t_2+t_3})+i(\beta^{t_2+t_3}-\beta^{t_3}-\beta^{t_1}+1)=0,
	\end{equation*}
	where $t_1,t_2,t_3\in\{-1,1\}$. Notice that, if $\{a,b\}\{b,c\}\{a,d\}\{d,c\}=C_4$, then $|V(C_4)\cap QR(q)|=2$. It is not hard to check that, if $t_1+1=t_2+t_3$, where $t_1,t_2,t_3\in\{-1,1\}$, then $\beta^{t_2+t_3}-\beta^{t_3}-\beta^{t_1}+1\neq0$, which implies that the one-factorization generated by the starter $S_\beta$ is $C_4$-free. Suppose that $t_1+1\neq t_2+t_3$.
	\begin{itemize}
		\item[Case (i): ] Suppose that $t_1=1$ and $-t_2=t_3=1$. In this case $(a-i)\beta^{-1}+i\in QR(q)$ and $(a\beta-i)\beta+i\in NQR(q)$. Since 
		$\{x,x\beta\}\in F_0$, where $x\in QR(q)$, then $x=(a-i)\beta^{-1}+i$ and $x\beta=(a\beta-i)\beta+i$. Hence, $(a\beta-i)\beta+i=(a-i)+i\beta$, which implies that $2i=a(\beta+1)$. Therefore, if $\beta+1\in NQR(q)$, then the one-factorization generated by the starter $S_\beta$ does include a cycle of length four; otherwise don't.
		
		\
		
		\item [Case (ii): ] Suppose that $t_1=1$ and $t_2=t_3=-1$. In this case $(a-i)\beta^{-1}+i\in NQR(q)$ and $(a\beta-i)\beta+i\in QR(q)$. Since 
		$\{x,x\beta\}\in F_0$, where $x\in QR(q)$, then $x=(a\beta-i)\beta+i$ and $x\beta=(a-i)\beta^{-1}+i$. Hence, $(a-i)\beta^{-1}+i=(a\beta-i)\beta^2+i\beta$, which implies that $$i(\beta^3-\beta^2+\beta-1)=a(\beta^4-1)=a(\beta+1)(\beta^3-\beta^2+\beta-1)$$ implying that $a-i=-a\beta$, which is a contradiction, given that $t_2=-1$ implies that $a-i\in NQR(q)$. the one-factorization generated by the starter $S_\beta$ is $C_4$-free.
		
		\
		
		\item [Case (iii): ] Suppose that $t_1=1$ and $t_2=-t_3=1$. In this case $(a-i)\beta+i\in NQR(q)$ and $(a\beta-i)\beta+i\in QR(q)$. Since 
		$\{x,x\beta\}\in F_0$, where $x\in QR(q)$, then $x=(a\beta-i)\beta+i$ and $x\beta=(a-i)\beta+i$. Hence, $(a-i)\beta+i=(a\beta-i)\beta^2+i\beta$, which implies that $i(\beta-1)=a\beta(\beta+1)$. If $\beta+1\in QR(q)$, then the one-factorization generated by the starter $S_\beta$ does include a cycle of length four; otherwise don't.
		
		\
		
		\item [Case (iv): ] Suppose that $t_1=-1$ and $t_2=t_3=1$. In this case $(a-i)\beta+i\in QR(q)$ and $(a\beta-i)\beta^{-1}+i\in NQR(q)$. Since 
		$\{x,x\beta\}\in F_0$, where $x\in QR(q)$, then $x=(a-i)\beta+i$ and $x\beta=(a\beta-i)\beta^{-1}+i$. Hence, $(a\beta-i)\beta^{-1}+i=(a-i)\beta^2+i\beta$, which implies that $a\beta(\beta+1)=i(\beta^2+1)$. If $\beta+1\in NQR(q)$, then the one-factorization generated by the starter $S_\beta$ does include a cycle of length four; otherwise don't.
		
		\
		
		\item [Case (v): ] Suppose that $t_1=t_2=t_3=-1$. In this case $(a-i)\beta^{-1}+i\in NQR(q)$ and $(a\beta-i)\beta^{-1}+i\in QR(q)$. Since 
		$\{x,x\beta\}\in F_0$, where $x\in QR(q)$, then $x=(a\beta-i)\beta^{-1}+i$ and $x\beta=(a-i)\beta^{-1}+i$. Hence, $(a-i)\beta^{-1}+i=(a\beta-i)+i\beta$, which implies that $i(\beta-1)^2=0$, which is a contradiction. 
	\end{itemize}
	
	It is not hard to check that Case (i) and Case (iv) generate the same (unique) cycle of length four. On the other hand, Case (iii) generate an unique cycle of length four. Therefore, the pair of orthogonal one-factorizations generated by the starter $S_\beta$ and $-S_\beta$ does include exactly one cycle of length four.
\end{proof}	

\begin{example}
	Let $q=59$. A primitive root of $\mathbb{F}_{59}$ is $r=2$ and $i=r^2=4$ is a generator of $QR(59)$. Hence, if $\beta=32$, then $\beta-1\in NQR(59)$ and $\beta^2+1\in QR(q)$. Given that $a=36\in QR(59)$ is such that $2i=a(\beta-1)$, then the one-factorization generated by the starter $S_{32}$ include an unique cycle $C_4$, with $V(C_4)=\{5,31,36,42\}$. 
\end{example}

\begin{remark}\label{remark}
	Let $q=ef+1$ be a prime power and let $H$ be the subgroup of $\mathbb{F}_q^*$ of order $f$ with $\{H=C_0,\ldots,C_{e-1}\}$ the set of (multiplicative) cosets of $H$ in $\mathbb{F}_q^*$ (that is, $C_i = g^iC_0$, where $g$ is the least primitive element of $\mathbb{F}_q$). The \emph{cyclotomic number}
	$(i,j)$ is defined as $|\{x\in C_i: x+1\in C_j\}|$. In particular, if $e=2$ and $f$ is odd, then $(0,0)=\frac{f-1}{2}$, $(0,1)=\frac{f+1}{2}$, $(1,0)=\frac{f-1}{2}$ and $(1,1)=\frac{f-1}{2}$. On the other hand, if $e=2$ and $f$ is even, then $(0,0)=\frac{f-2}{2}$, $(0,1)=\frac{f}{2}$, $(1,0)=\frac{f}{2}$ and $(1,1)=\frac{f}{2}$, see \cite{book:206537}, Table VII.8.50.
\end{remark}

Hence, if $e=2$, then $C_0=QR(q)$ and $C_1=NQR(q)$ are the cosets of $QR(q)$ in $\mathbb{F}_q^*$.

\begin{lema}\label{lemma:conjunto_no_vacio}
	Let $q\equiv$3 (mod 8) be an odd prime power such that $q\equiv-1$ (mod 12). Then the set $$M=\{\beta\in NQR(q)\setminus\{-1\}: \beta-1\in NQR(q),\beta^2+1\in QR(q)\}\neq\emptyset$$
\end{lema}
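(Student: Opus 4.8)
The plan is to count $|M|$ by a quadratic–character sum and show the count is positive. Let $\chi$ denote the quadratic character of $\mathbb{F}_q$, with the convention $\chi(0)=0$. The hypotheses give $q\equiv 3\pmod 4$, so $\chi(-1)=-1$ and $\beta^2+1$ has no root in $\mathbb{F}_q$; and $q\equiv 3\pmod 8$, so $\chi(2)=-1$ and hence $\chi(-2)=1$, which means the requirement $\chi(\beta-1)=-1$ already rules out $\beta=-1$, so the side condition $\beta\neq -1$ in the definition of $M$ is automatic. Therefore
\begin{equation*}
|M|=N:=\sum_{\beta\in\mathbb{F}_q\setminus\{0,1\}}\frac{1-\chi(\beta)}{2}\cdot\frac{1-\chi(\beta-1)}{2}\cdot\frac{1+\chi(\beta^2+1)}{2}.
\end{equation*}

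The first step is to expand the product into its eight terms and to evaluate the seven of them that do not involve the quartic $\beta(\beta-1)(\beta^2+1)$. Each such term is an explicit constant by standard facts: $\sum_{x\in\mathbb{F}_q}\chi(x^2+a)=-1$ for every $a\neq 0$ (applied to $\beta^2+1$ and, after completing the square, to $\beta(\beta-1)$); the values $\chi(-1)=\chi(2)=-1$; and $\sum_{x\in\mathbb{F}_q}\chi(x^3+x)=0$, which holds because $x\mapsto -x$ carries $x^3+x$ to its negative while $\chi(-1)=-1$, so the sum cancels with itself. Keeping careful track of the arguments $\beta=0,1$ removed from the range (and of the fact that $\beta^2+1$ never vanishes) turns the identity above into
\begin{equation*}
8N=q-6-S_3+S_4,\qquad S_3=\sum_{\beta\in\mathbb{F}_q}\chi\bigl((\beta-1)(\beta^2+1)\bigr),\qquad S_4=\sum_{\beta\in\mathbb{F}_q}\chi\bigl(\beta(\beta-1)(\beta^2+1)\bigr).
\end{equation*}

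The second step is to bound $S_3$ and $S_4$. Since $-1\in NQR(q)$, the quadratic $\beta^2+1$ is irreducible over $\mathbb{F}_q$ and coprime to $\beta$ and to $\beta-1$; hence both $(\beta-1)(\beta^2+1)$ and $\beta(\beta-1)(\beta^2+1)$ are squarefree, so Weil's theorem on character sums gives $|S_3|\le 2\sqrt q$ and $|S_4|\le 3\sqrt q$. Thus $8N\ge q-5\sqrt q-6$, which is strictly positive (so $N\ge 1$) as soon as $q\ge 37$. Every prime power $q\equiv 11\pmod{24}$ with $q\neq 11$ satisfies $q\ge 37$, so $M\neq\emptyset$ in all those cases; the unique smaller admissible prime power, $q=11$, is handled by a direct inspection of $\mathbb{F}_{11}$.

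The part requiring care is the expansion: there are several single- and double-character sums, and one must match each to the correct standard evaluation and account precisely for the excluded arguments $\beta=0,1$; the only genuinely non-formal ingredient in that step is $\sum_x\chi(x^3+x)=0$, which uses $q\equiv 3\pmod 4$ in an essential way. Once $8N=q-6-S_3+S_4$ is established the Weil estimate is routine, and it can be sharpened (both associated curves have genus $1$, so in fact $|S_3|\le 2\sqrt q$ and $|S_4|\le 2\sqrt q+1$) to lower the threshold, though this refinement is not needed.
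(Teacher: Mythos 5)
Your character-sum identity is correct: expanding the eight terms and tracking the excluded arguments $\beta=0,1$ does give $8N=q-6-S_3+S_4$ (I verified each auxiliary term, including $\sum_x\chi(x^3+x)=0$ via $\chi(-1)=-1$ and the contribution $-\chi(-1)=+1$ absorbed into the constant), both polynomials are squarefree since $x^2+1$ is irreducible and coprime to $x$ and $x-1$, and the Weil bounds then yield $N\geq1$ for all $q\geq37$. This is a genuinely different route from the paper, which gives a short ad hoc argument built on the auxiliary set $A=\{\alpha\in QR(q):\alpha^2+1,\,\alpha^2-1\in QR(q)\}$ and the substitution $\beta=-\alpha^2$; your method has the advantage of actually estimating $|M|$ rather than exhibiting one element.

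The gap is the final sentence: $q=11$ is \emph{not} "handled by a direct inspection" --- the inspection shows $M=\emptyset$ there, so the lemma as stated is false for $q=11$. Concretely, $QR(11)=\{1,3,4,5,9\}$, so the candidates are $\beta\in NQR(11)\setminus\{-1\}=\{2,6,7,8\}$; for $\beta=2,6$ one has $\beta-1=1,5\in QR(11)$, and for $\beta=7,8$ one has $\beta^2+1=6,10\in NQR(11)$. This is consistent with your own formula: one computes $S_3=2$ and $S_4=-3$, whence $8N=11-6-2-3=0$. So your argument proves the statement for every admissible $q\equiv11\pmod{24}$ with $q\geq37$ (i.e.\ all of them except $q=11$), but the remaining case cannot be closed because the statement fails there; the fix is to exclude $q=11$ from the lemma (and to note that the paper's own proof is also unsound at this point: for $q=11$ the element $\alpha=9$ lies in $A$ with $\alpha-1=8\in NQR(11)$, yet $M=\emptyset$, contradicting the paper's claim that $\alpha-1\in NQR(q)$ forces $M\neq\emptyset$). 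Any downstream use of the lemma, in particular Theorem~\ref{theorem:main_1} and Theorem~\ref{theorem:main_2} at $q=11$, would need a separate verification.
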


\begin{proof}
	Given that $-1\in NQR(q)$, then $\beta\to\beta^2$, for all $\in\beta\in NQR(q)$, is a bijection between $NQR(q)$ and $QR(q)$. Let $A=\{\alpha\in QR(q):\alpha^2+1,\alpha^2-1\in QR(q)\}$ (see Remark \ref{remark}). Assume that $\alpha-1\in QR(q)$ (which implies that $\alpha+1\in QR(q)$), otherwise, if $\alpha-1\in NQR(q)$, then $M\neq\emptyset$. Moreover, assume that $\alpha^4+1\in NQR(q)$, otherwise $\beta=-\alpha^2\in M$. Given that $\alpha-1,\alpha+1\in QR(q)$, then $\alpha^2-1\in QR(q)$, which implies that $(\alpha^2-1)(\alpha^2+1)\in QR(q)$ and $\alpha^4+1\in NQR(q)$, a contradiction.
\end{proof}

Hence, by Theorem \ref{theorem:noinf}, Lemma \ref{lemma:final} and Lemma \ref{lemma:conjunto_no_vacio}, we have:

\begin{teo}\label{theorem:main_1}
Let $q\equiv$3 (mod 8) be an odd prime power such that $q\equiv-1$ (mod 12). Then there exist a one-factorization $(1,C_4)$ of $K_{q+1}$.
\end{teo}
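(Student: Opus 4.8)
The plan is to realise the required one-factorization concretely as the one generated by a Horton-type strong starter $S_\beta$ for a well-chosen $\beta\in NQR(q)\setminus\{-1\}$, and then to assemble the structural facts already at our disposal. First I would note that the hypotheses $q\equiv 3\pmod 8$ and $q\equiv -1\pmod{12}$ together force $q\equiv 11\pmod{24}$; in particular $q\equiv 3\pmod 4$ and $q\geq 11$. Hence, for every $\beta\in NQR(q)\setminus\{-1\}$, Proposition \ref{prop:Horton} applies and $S_\beta=\{\{x,x\beta\}:x\in QR(q)\}$ is a strong starter for $\mathbb{F}_q$, so that $\mathcal{F}=\{F_i:i\in\mathbb{F}_q\}$ with $F_i=\{\{\infty,i\}\}\cup\{\{x+i,x\beta+i\}:x\in QR(q)\}$ is a one-factorization of $K_{q+1}$ on the vertex set $\mathbb{F}_q\cup\{\infty\}$. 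By Theorem \ref{theorem:uniforme} this one-factorization is uniform, so $\mathcal{F}$ is $(1,C_4)$ if and only if the single union $F_0\cup F_i$, with $i$ a fixed generator of $QR(q)$, contains exactly one cycle of length four.

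Next I would make the choice of $\beta$ and conclude. By Lemma \ref{lemma:conjunto_no_vacio}, under the present congruences on $q$ the set
\[
M=\{\beta\in NQR(q)\setminus\{-1\}:\ \beta-1\in NQR(q),\ \beta^2+1\in QR(q)\}
\]
is nonempty, so I fix some $\beta\in M$. For this $\beta$ all the hypotheses of Lemma \ref{lemma:final} are satisfied, and that lemma asserts precisely that the one-factorization generated by $S_\beta$ is $(1,C_4)$; equivalently, $F_0\cup F_i$ contains exactly one cycle of length four. Combined with the reduction of the first paragraph, this shows that $\mathcal{F}$ is a $(1,C_4)$ one-factorization of $K_{q+1}$, which is the statement of Theorem \ref{theorem:main_1}.

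Granting Lemma \ref{lemma:final} and Lemma \ref{lemma:conjunto_no_vacio}, this deduction is essentially an assembly step and presents no real obstacle; the content of the theorem is carried by those two lemmas, and that is where the delicate work lies. In Lemma \ref{lemma:final} one first observes (using $q\equiv -1\pmod{12}$ via Lemma \ref{lemma:3QR}, together with Lemma \ref{lemma:uno}) that no $C_4$ through $\infty$ can occur, and then reduces the existence of a $C_4$ avoiding $\infty$ to the equation $a(\beta^{t_1+1}-\beta^{t_2+t_3})+i(\beta^{t_2+t_3}-\beta^{t_3}-\beta^{t_1}+1)=0$ with $t_1,t_2,t_3\in\{-1,1\}$; the crux is the case bookkeeping, which must show that the cases with $t_1+1=t_2+t_3$ contribute nothing, that two of the remaining cases are impossible, and that of the rest, Cases (i) and (iv) together produce one and the same $C_4$ exactly when $\beta+1\in NQR(q)$ while Case (iii) produces a single $C_4$ exactly when $\beta+1\in QR(q)$ — a genuine dichotomy, so that precisely one $C_4$ survives in either alternative. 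In Lemma \ref{lemma:conjunto_no_vacio} the work is a cyclotomic-number count (Remark \ref{remark}) guaranteeing $M\neq\emptyset$. These are the steps where care is needed; the proof of Theorem \ref{theorem:main_1} itself is only the bookkeeping that glues them together.
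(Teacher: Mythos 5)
Your proposal matches the paper's own argument: the paper derives Theorem \ref{theorem:main_1} exactly by combining Lemma \ref{lemma:conjunto_no_vacio} (nonemptiness of $M$) with Lemma \ref{lemma:final} (for $\beta\in M$ the one-factorization generated by $S_\beta$ is $(1,C_4)$), which is precisely your assembly. Your added remarks on uniformity via Theorem \ref{theorem:uniforme} and on where the real work lies are consistent with the paper's structure, so this is the same route.
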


%%%%%%%%%%%%%%%%%%%%%%%%%%%%%%%%%%%%%%%%%%%%%%%%%%%%%5
%%%%%%%%%%%%%%%%%%%%%%%%%%%%%%%%%%%%%%%%%%%%%%%%%%%%%%%
\section{$(2,C_4)$ two orthogonal one-factorizations of complete graph}
In this section we will prove that the strong starter given by Horton (see Proposition \ref{prop:Horton}) doesn't generate two orthogonal $(1,C_4)$ one-factorization of the complete graph; however, does generate two orthogonal $C_4$-free (see for instance \cite{Bao,AvilaC4free}) and $(2,C_4)$ one-factorizations of complete graphs.

\begin{lema}\label{lemma:beta,betamedios,orto}
	Let $q\equiv3$ (mod 8) be an odd prime power and $\beta\in NQR(q)\setminus\{-1\}$. If $\beta\not\in\{2,2^{-1}\}$, then the pair of orthogonal one-factorizations generated by the starter $S_\beta$ and $-S_\beta$ doesn't include a cycle of length four, $C_4$, with $\infty\not\in V(C_4)$.
\end{lema}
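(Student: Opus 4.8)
The plan is to follow the same template as the proofs of Theorem \ref{theorem:noinf} and Lemma \ref{lemma:final}: consider the pair $\mathcal{F}$, $\mathcal{F}'$ of one-factorizations generated by $S_\beta$ and $-S_\beta$, and ask when the union of a one-factor $F\in\mathcal{F}$ and a one-factor $F'\in\mathcal{F}'$ contains a $4$-cycle avoiding $\infty$. By Theorem \ref{theorem:uniforme} both factorizations are uniform, so (after the usual translation) it suffices to analyze $F_0\cup F_0'$, where $F_0=\{\{\infty,0\}\}\cup\{\{x,x\beta\}:x\in QR(q)\}$ and $F_0'=\{\{\infty,0\}\}\cup\{\{x,-x\beta\}:x\in QR(q)\}$; note that the edges of $F_0'$ through a point $a\neq\infty$ join $a$ to $-a\beta^{\pm1}$ according to whether $a\in QR(q)$ or $a\in NQR(q)$ (more precisely whether $-a\beta^{-1}\in QR(q)$, equivalently $-a\in QR(q)$, equivalently $a\in NQR(q)$ since $-1\in NQR(q)$). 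A $C_4$ avoiding $\infty$ has exactly two vertices in $QR(q)$ (as remarked in the earlier proofs, from $\{a,b\}\{b,c\}\{a,d\}\{d,c\}=C_4$ with the factor-$F_0$ edges pairing a residue with its $\beta$-multiple), so we may pick $a\in V(C_4)\cap QR(q)$ with $\{a,a\beta\}\in F_0$, trace the alternating path through $F_0'$ and back through $F_0$, and obtain the governing equation
\[
a\bigl(\beta^{\,1}\cdot\varepsilon_1-\beta^{\,t_2}\beta^{\,t_3}\varepsilon_2\bigr)\;+\;(\text{analogous }\beta\text{-polynomial with the relevant signs})=0,
\]
i.e. a relation of the shape $a\cdot P_\beta=0$ for a fixed Laurent polynomial $P_\beta$ in $\beta$ whose exponents and signs depend on the four sign parameters $t_1,t_2,t_3\in\{-1,1\}$ together with the sign flips coming from $-S_\beta$.

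Concretely, first I would write $F_0'$ as $\{\{\infty,0\}\}\cup\{\{y,y(-\beta)\}:y\in QR(q)\}$ — but $-\beta$ need not be an $NQR$, so it is cleaner to keep $-S_\beta=\{\{-x,-x\beta\}:x\in QR(q)\}$ and record that the $F_0'$-edge at a vertex $v$ goes to $-(-v)\beta=v\beta$ if $-v\in QR(q)$... — this sign bookkeeping is exactly the point where care is needed, so I would set it up once and tabulate. Then, for each of the $2^3=8$ sign patterns, the equation $a\,P_\beta=0$ forces either $P_\beta\equiv 0$ identically in the field (an identity among powers of $\beta$, handled as in the earlier proofs via the ``$t_1+1=t_2+t_3$'' observation, which makes the companion polynomial nonzero and kills that case), or else it pins down $a$ as an explicit ratio of $\beta$-polynomials and then one checks a residue condition. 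The key arithmetic inputs are the standing hypotheses: $q\equiv3\pmod 8$ gives $-1\in NQR(q)$ and $2\in NQR(q)$ (hence $-2\in QR(q)$), and these quadratic-character facts are what turn each would-be $C_4$ into a contradiction.

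The decisive step is isolating where $\beta\in\{2,2^{-1}\}$ enters. I expect that in each surviving sign pattern the $C_4$-existence condition simplifies to something like ``$a(\beta^2-1)=\pm\, 2i\beta$'' or ``$\beta(\beta-2)=\cdots$'', whose solvability for $a\in QR(q)$ collapses precisely to $\beta-2=0$ or $1-2\beta=0$; ruling those out by hypothesis leaves a quadratic-residue contradiction (using $2\in NQR(q)$, $-1\in NQR(q)$, and that $\beta,\beta-1,\beta^2+1$ lie in prescribed cosets via the cyclotomic numbers of Remark \ref{remark}). The main obstacle, and the only genuinely delicate part, is the sign/coset bookkeeping for $-S_\beta$: because $-1\in NQR(q)$, negation swaps the roles of the $\beta$ and $\beta^{-1}$ edges, so the eight cases here are not literally the eight cases of Lemma \ref{lemma:final} — several of them must be reworked, and one must be sure no case is silently double-counted or omitted. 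Once that table is correct, each entry is a one-line character computation of the type already displayed in Theorem \ref{theorem:noinf}, and the conclusion $\beta\notin\{2,2^{-1}\}\Rightarrow$ no $\infty$-free $C_4$ follows.
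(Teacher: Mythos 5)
Your proposal is aimed at the wrong cycles, and as a result it cannot succeed. Despite the literal wording of the statement, the claim the paper actually establishes here (and the only one consistent with the rest of the paper) is that the union of the two one-factors contains no $4$-cycle \emph{through} $\infty$; the ``$\infty\not\in V(C_4)$'' in the statement is evidently a typo for ``$\infty\in V(C_4)$''. You can see this from Lemma \ref{lemma:final_2}, which cites this lemma and then exhibits, under hypotheses compatible with the present ones, exactly two $4$-cycles with $\infty\not\in V(C_4)$ in $F_0\cup G_i$; so the assertion you set out to prove --- that no $4$-cycle avoids $\infty$ --- is false in general, and no amount of sign bookkeeping across your eight cases will close it. The paper's own argument is correspondingly short: a $4$-cycle through $\infty$ must consist of the edges $\{\infty,0\}\in F_0$ and $\{\infty,i\}\in G_i$ together with the $G_i$-edge at $0$, namely $\{0,-i\beta^{t_1}+i\}$, and the $F_0$-edge at $i$, namely $\{i,i\beta^{t_2}\}$; since $-1\in NQR(q)$ forces $t_1=t_2=t$, closing the cycle requires $-i\beta^{t}+i=i\beta^{t}$, i.e.\ $2\beta^{t}=1$, and this is exactly where the hypothesis $\beta\not\in\{2,2^{-1}\}$ enters.

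Even setting the misreading aside, what you wrote is a plan rather than a proof: the governing equation is left schematic (``analogous $\beta$-polynomial with the relevant signs''), the description of the second factorization is garbled ($-S_\beta=\{\{-x,-x\beta\}:x\in QR(q)\}=\{\{y,y\beta\}:y\in NQR(q)\}$, so its translates are the $G_i$ above, not $\{\{x,-x\beta\}:x\in QR(q)\}$), and the decisive step --- where $\beta\in\{2,2^{-1}\}$ is supposed to appear --- is only conjectured (``I expect that \dots collapses precisely to $\beta-2=0$ or $1-2\beta=0$''), with none of the eight cases actually carried out. To repair this, prove the corrected statement: restrict attention to $4$-cycles containing $\infty$ and run the short computation indicated above; the analysis of $\infty$-free $4$-cycles belongs to Lemma \ref{lemma:final_2}, not here.
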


\begin{proof}
	Let $F_i=\{\{\infty,i\}\}\cup\{\{x+i,x\beta+i\}: x\in QR(q)\}$ and $G_i=\{\{\infty,i\}\}\cup\{\{y+i,y\beta+i\}: y\in NQR(q)\}$, for $i\in\mathbb{F}_q$. Then $\mathcal{F}=\{F_i:i\in\mathbb{F}_q\}$ and $\mathcal{G}=\{G_i:i\in\mathbb{F}_q\}$ are orthogonal one-factorizations of the complete graph on $\mathbb{F}_q \cup\{\infty\}$. Given that $F_i=F_0+i$ and $G_i=G_0+i$, without loss of generality we can suppose that $F_0\cup G_i$, where $i\in\mathbb{F}(q)$, doesn't include a cycle of length 4, $C_4$, such that $\infty\not\in V(C_4)$. 
	
	Assume to the contrary that $F_0\cup G_i$, for some $i\in\mathbb{F}_q$, does include a cycle of length 4, $C_4$, such that $\infty\in V(C_4)$. Consequently $\{\infty,0\},\{\infty,i\}\in E(C_4)$, where $\{\infty,0\}\in F_0$ and $\{\infty,i\}\in G_i$. Then $\{0,-i\beta^{t_1}+i\},\{i,i\beta^{t_2}\}\in E(C_4)$, where $t_1=1$ if $-i\in NQR(q)$ and $t_1=-1$ if $-i\in QR(q)$, and $t_2=1$ if $i\in QR(q)$ and $t_2=-1$ if $i\in NQR(q)$. Given that, if $i\in QR(q)$ then $t_2=1$ and $t_1=1$, and if $i\in NQR(q)$ then $t_2=-1$ and $t_1=-1$. Therefore $-i\beta^{t_1}+i=i\beta^{t_2}$, where $t_1,t_2\in\{-1,1\}$ with $t_1=t_2=t$. Hence, $2\beta^t=1$ with $t\in\{-1,1\}$, which is a contradiction to the hypothesis.
\end{proof}

\begin{lema}\label{lemma:final_2}
	Let $q\equiv3$ (mod 8) be an odd prime power with $q\equiv-1$ (mod 12) and $\beta\in NQR(q)\setminus\{-1\}$. If $\beta\not\in\{2^{-1},2\}$ is such that $\beta+1,\beta-1\in NQR(q)$ and $\beta^2+1\in QR(q)$, then the pair of orthogonal one-factorizations generated by the starter $S_\beta$ and $-S_\beta$ does include exactly two cycles of length four.
\end{lema}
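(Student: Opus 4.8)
The plan is to run, for the union $F\cup G$ of a one-factor $F$ of the one-factorization $\mathcal F$ generated by $S_\beta$ and a one-factor $G$ of the one-factorization $\mathcal G$ generated by $-S_\beta$, the same analysis that proves Lemma~\ref{lemma:final}. Write
\[
F_i=\{\{\infty,i\}\}\cup\{\{x+i,x\beta+i\}:x\in QR(q)\},\qquad G_i=\{\{\infty,i\}\}\cup\{\{w+i,w\beta+i\}:w\in NQR(q)\},
\]
so that $\mathcal F=\{F_i:i\in\mathbb F_q\}$ and $\mathcal G=\{G_i:i\in\mathbb F_q\}$ (here $-S_\beta=\{\{w,w\beta\}:w\in NQR(q)\}$ because $-1\in NQR(q)$, and $\mathcal F,\mathcal G$ are orthogonal by the theorem of Horton cited above). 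Since $F_g=F_0+g$, $G_h=G_0+h$, and since multiplication by any $c\in QR(q)$ fixes both $S_\beta$ and $-S_\beta$ and hence permutes $\mathcal F$ and $\mathcal G$, the cycle structure of $F_g\cup G_h$ depends only on the coset of $h-g$ in $\mathbb F_q^{*}/QR(q)$; thus it is enough to count the $C_4$'s in $F_0\cup G_i$ for a fixed $i\in QR(q)$ (the case $h-g\in NQR(q)$ being handled identically). By Lemma~\ref{lemma:beta,betamedios,orto} --- this is where $\beta\notin\{2,2^{-1}\}$ enters --- none of these $C_4$'s meets $\infty$, so everything happens inside $\mathbb F_q$.

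Since every edge of $F_0$ joins a quadratic residue $x$ to the non-residue $x\beta$, a $C_4$ in $F_0\cup G_i$ alternates $F_0$-edge, $G_i$-edge, $F_0$-edge, $G_i$-edge and has exactly two vertices in $QR(q)$. Fix one of its $F_0$-edges, $\{a,a\beta\}$ with $a\in QR(q)$; following the cycle, the $G_i$-edge through $a\beta$ gives $c=(a\beta-i)\beta^{s_1}+i$ with $s_1=1$ if $a\beta-i\in NQR(q)$ and $s_1=-1$ otherwise, the $G_i$-edge through $a$ gives $d=(a-i)\beta^{s_2}+i$ with $s_2=1$ if $a-i\in NQR(q)$ and $s_2=-1$ otherwise, and $\{c,d\}\in F_0$ forces $c=d\beta^{s_3}$ with $s_3=1$ if $d\in QR(q)$ and $s_3=-1$ otherwise. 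Eliminating $c$ and $d$ produces exactly the master equation of Lemma~\ref{lemma:final},
\[
a\bigl(\beta^{\,s_1+1}-\beta^{\,s_2+s_3}\bigr)+i\bigl(\beta^{\,s_2+s_3}-\beta^{\,s_3}-\beta^{\,s_1}+1\bigr)=0,\qquad s_1,s_2,s_3\in\{-1,1\},
\]
together with the demand that the signs $s_1,s_2,s_3$ be the ones the rules above assign to the $a$ it forces, and with $a\beta-i\neq0$, $a-i\neq0$, $d\neq0$ and $\{a,a\beta,c,d\}$ pairwise distinct. The sole difference with Lemma~\ref{lemma:final} is that the residue conditions on $a\beta-i$ and $a-i$ now come from $G_i$, whose pairs range over $NQR(q)$ rather than $QR(q)$; that is the discrepancy that turns the count of one into a count of two.

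Next comes the case analysis over the eight triples $(s_1,s_2,s_3)$. For the three with $s_1+1=s_2+s_3$, namely $(1,1,1)$, $(-1,1,-1)$ and $(-1,-1,1)$, the coefficient of $a$ vanishes while $\beta^{\,s_2+s_3}-\beta^{\,s_3}-\beta^{\,s_1}+1$ equals $(\beta-1)^2$, $2\beta^{-1}(\beta-1)$ and $-\beta^{-1}(\beta-1)^2$ respectively, all nonzero since $\beta\neq1$; as $i\neq0$, no $C_4$ comes from these. For each of the five remaining triples the equation determines $a$ uniquely, and I would test whether that $a$ lies in $QR(q)$ and whether it actually satisfies the three defining sign conditions; this reduces to reading off the residue character of a handful of explicit expressions in $\beta$ and $i$, using $i\in QR(q)$, $2\in NQR(q)$ (because $q\equiv3\pmod 8$), $3\in QR(q)$ and $\beta^2-\beta+1\neq0$ (because $q\equiv-1\pmod{12}$, by Lemma~\ref{lemma:3QR}), and the hypotheses $\beta+1,\beta-1\in NQR(q)$, $\beta^2+1\in QR(q)$, $\beta\notin\{2,2^{-1}\}$. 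The expected outcome is that the consistent triples assemble into exactly two distinct cycles of length four (each $C_4$ being anchored at the quadratic residue of each of its two $F_0$-edges, so it is seen twice), whence $F_0\cup G_i$ has exactly two $C_4$'s and, by the reduction of the first paragraph, the pair $(\mathcal F,\mathcal G)$ is $(2,C_4)$.

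The main obstacle is the last step: producing one $C_4$ is routine, but pinning the number down to \emph{exactly} two requires eliminating all but the right sign patterns, and this leans on every congruence condition on $q$ and on all three conditions on $\beta$ --- in particular on $\beta+1\in NQR(q)$, the hypothesis absent from Lemma~\ref{lemma:final}, whose effect is precisely to change which patterns survive and hence to move the count from one to two.
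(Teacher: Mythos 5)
Your setup is the same as the paper's: the same one-factors $F_i$ and $G_i$, the reduction via Lemma \ref{lemma:beta,betamedios,orto} to cycles avoiding $\infty$, the same master equation in the sign pattern $(s_1,s_2,s_3)$ with the residue tests correctly flipped to $NQR(q)$ for the edges coming from $G_i$, and the correct dismissal of the three patterns with $s_1+1=s_2+s_3$ (your factorizations $(\beta-1)^2$, $2\beta^{-1}(\beta-1)$, $-\beta^{-1}(\beta-1)^2$ check out). But the proof stops exactly where the lemma's content begins. For the five remaining patterns you write that you ``would test'' whether the forced $a$ is a quadratic residue satisfying the assigned sign conditions, and that ``the expected outcome'' is exactly two cycles; you even flag this as ``the main obstacle.'' That is the whole theorem: the paper's proof consists precisely of solving each of the five cases explicitly (obtaining relations such as $a\beta(\beta+1)=i(\beta-1)$, $2i=a(\beta+1)$, $i=a(\beta+1)$, $a\beta(\beta+1)=i(\beta^2+1)$, $a(\beta+1)=-i(\beta-1)$), deciding from the hypotheses on $\beta$ and the residue class of $i$ which patterns are realizable, and then checking that the surviving $(a,\text{pattern})$ pairs assemble into exactly two distinct $4$-cycles rather than one or three. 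Without that computation nothing distinguishes this lemma from a $(1,C_4)$ or $(3,C_4)$ conclusion, so the argument as written does not establish the statement.

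A secondary gap: your reduction to a single representative $i\in QR(q)$, with the claim that $h-g\in NQR(q)$ is ``handled identically,'' is not innocuous. In the case analysis the realizability of each sign pattern depends on the residue class of $i$ (in the paper's version, patterns (i), (iii), (v) require $i\in NQR(q)$ while (ii) and (iv) require $i\in QR(q)$), so the two cosets activate disjoint sets of patterns and each must separately be shown to yield exactly two cycles; one cannot transport the count from one coset to the other by symmetry. The multiplicative-scaling observation itself is fine and is a clean way to justify considering only two values of $i$, but both values then genuinely need their own case analysis.
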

\begin{proof}
	Let $F_i=\{\{\infty,i\}\}\cup\{\{x+i,x\beta+i\}: x\in QR(q)\}$ and $G_i=\{\{\infty,i\}\}\cup\{\{y+i,y\beta+i\}: y\in NQR(q)\}$, for $i\in\mathbb{F}_q$. Then $\mathcal{F}=\{F_i:i\in\mathbb{F}_q\}$ and $\mathcal{G}=\{G_i:i\in\mathbb{F}_q\}$ are orthogonal one-factorizations of the complete graph on $\mathbb{F}_q \cup\{\infty\}$. Given that $F_i=F_0+i$ and $G_i=G_0+i$, without loss of generality we can suppose that $F_0\cup G_i$, where $i\in\mathbb{F}(q)$, does include one cycle of length four, $C_4$, such that $\infty\not\in V(C_4)$ (see Lemma \ref{lemma:beta,betamedios,orto}). Given that $E(C_4)\cap F_0\neq\emptyset$, then there is an $a\in V(C_4)\cap QR(q)$ such that
	\begin{center}
		$\{a,a\beta\}\in F_0$ and $\{a\beta,(a\beta-i)\beta^{t_1}+i\}\in G_i$,
	\end{center}
	whit $t_1=1$ if $a\beta-i\in NQR(q)$, and $t_1=-1$ if $a\beta-i\in QR(q)$. On the other hand,
	\begin{center}
		$\{a,(a-i)\beta^{t_2}+i\}\in G_i$ and $\{(a-i)\beta^{t_2}+i,((a-i)\beta^{t_2}+i)\beta^{t_3}\}\in F_0$,	
	\end{center}
	with $t_2=1$ if $a-i\in NQR(q)$ and $t_2=-1$ if $a-i\in QR(q)$, and $t_3=1$ if $(a-i)\beta^{t_2}+i\in QR(q)$ and $t_3=-1$ if $(a-i)\beta^{t_2}+i\in NQR(q)$. Therefore
	\begin{equation*}
	a(\beta^{t_1+1}-\beta^{t_2+t_3})+i(\beta^{t_2+t_3}-\beta^{t_3}-\beta^{t_1}+1)=0,
	\end{equation*}
	for all $t_1,t_2,t_3\in\{-1,1\}$. Notice that, if $\{a,b\}\{b,c\}\{a,d\}\{d,c\}=C_4$, then $|V(C_4)\cap QR(q)|=2$. It is not hard to check that, if $t_1+1=t_2+t_3$, where $t_1,t_2,t_3\in\{-1,1\}$, then $\beta^{t_2+t_3}-\beta^{t_3}-\beta^{t_1}+1\neq0$, which implies that the pair of orthogonal one-factorizations generated by the starter $S_\beta$ and $-S_\beta$ doesn't include a cycle of length four.  Suppose that $t_1+1\neq t_2+t_3$.
	
	\begin{itemize}
		\item [Case (i):] Suppose that $t_1=t_2=1$ and $t_3=-1$. In this case $(a\beta-i)\beta+i\in QR(q)$ and $(a-i)\beta+i\in NQR(q)$. Since 
		$\{x,x\beta\}\in F_0$, where $x\in QR(q)$, then $x=(a\beta-i)\beta+i$ and $x\beta=(a-i)\beta+i$. Hence, $(a\beta-i)\beta^2+i\beta=(a-i)\beta+i$, which implies that $a\beta(\beta+1)=i(\beta-1)$. If $i\in NQR(q)$, then the pair of orthogonal one-factorizations generated by the starter $S_\beta$ and $-S_\beta$ does include a cycle of length four; otherwise don't.
		
		\ 
		
		\item [Case (ii):] Suppose that $t_1=t_3=1$ and $t_2=-1$. In this case $(a\beta-i)\beta+i\in NQR(q)$ and $(a-i)\beta^{-1}+i\in QR(q)$. Since 
		$\{x,x\beta\}\in F_0$, where $x\in QR(q)$, then $x=(a-i)\beta^{-1}+i$ and $x\beta=(a\beta-i)\beta+i$. Hence, $(a\beta-i)\beta+i=(a-i)+i\beta$, which implies that $2i=a(\beta+1)$. If $i\in QR(q)$, then the pair of orthogonal one-factorizations generated by the starter $S_\beta$ and $-S_\beta$ does include a cycle of length four; otherwise don't.
		
		\
		
		\item [Case (iii):] Suppose that $t_1=1$ and $t_2=t_3=-1$. In this case $(a\beta-i)\beta+i\in QR(q)$ and $(a-i)\beta^{-1}+i\in NQR(q)$. Since 
		$\{x,x\beta\}\in F_0$, where $x\in QR(q)$, then $x=(a\beta-i)\beta+i$ and $x\beta=(a-i)\beta^{-1}+i$. Hence, $(a\beta-i)\beta^2+i\beta=(a-i)\beta^{-1}+i$, which implies that $i=a(\beta+1)$. If $i\in NQR(q)$, then the pair of orthogonal one-factorizations generated by the starter $S_\beta$ and $-S_\beta$ does include a cycle of length four; otherwise don't.

		\
		
		\item [Case (iv):] Suppose that $t_2=t_3=1$ and $t_1=-1$. In this case $(a-i)\beta+i\in QR(q)$ and $(a\beta-i)\beta^{-1}+i\in NQR(q)$. Since 
		$\{x,x\beta\}\in F_0$, where $x\in QR(q)$, then $x=(a-i)\beta+i$ and $x\beta=(a\beta-i)\beta^{-1}+i$. Hence, $(a-i)\beta^2+i\beta=(a\beta-i)\beta^{-1}+i$, which implies that $a\beta(\beta+1)=i(\beta^2+1)$. If $i\in QR(q)$, then the pair of orthogonal one-factorizations generated by the starter $S_\beta$ and $-S_\beta$ does include a cycle of length four; otherwise don't.
		
		\
		
		\item [Case (v): ] Suppose that $t_1=t_2=t_3=-1$. In this case $(a\beta-i)\beta^{-1}+i\in QR(q)$ and $(a-i)\beta^{-1}+i\in NQR(q)$. Since 
		$\{x,x\beta\}\in F_0$, where $x\in QR(q)$, then $x=(a\beta-i)\beta^{-1}+i$ and $x\beta=(a-i)\beta^{-1}+i$. Hence, $(a\beta-i)+i\beta=(a-i)\beta^{-1}+i$, which implies that $a(\beta+1)=-i(\beta-1)$. If $i\in NQR(q)$, then the pair of orthogonal one-factorizations generated by the starter $S_\beta$ and $-S_\beta$ does include a cycle of length four; otherwise don't.
	\end{itemize}
	It is not hard to check that Case (iii) and Case (v) generate different cycles of length four. On the other hand, Case (iii) generate one cycle of length four and Case (i) and (v) generate the other cycle of length four. Therefore, the pair of orthogonal one-factorizations generated by the starter $S_\beta$ and $-S_\beta$ does include exactly two cycles of length four.
\end{proof}

\begin{teo}\label{theorem:main_2}
Let $q\equiv$3 (mod 8) be an odd prime power such that $q\equiv-1$ (mod 12). Then there is a pair of orthogonal $(2,C_4)$ one-factorization of $K_{q+1}$.
\end{teo}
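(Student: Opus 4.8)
The plan is to realise the pair by Horton's construction and to quote Lemmas \ref{lemma:beta,betamedios,orto} and \ref{lemma:final_2}, so that the whole problem collapses to a single existence statement about the quadratic character. For $\beta\in NQR(q)\setminus\{-1\}$ let $S_\beta=\{\{x,x\beta\}:x\in QR(q)\}$ be the strong starter of Proposition \ref{prop:Horton}; by \cite{MR623318} the starters $S_\beta$ and $-S_\beta$ are orthogonal, so they generate a pair of orthogonal one-factorizations $\mathcal{F}=\{F_i:i\in\mathbb{F}_q\}$ and $\mathcal{G}=\{G_i:i\in\mathbb{F}_q\}$ of $K_{q+1}$. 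First I would record that $q\equiv 3\pmod 8$ together with $q\equiv -1\pmod{12}$ is the same as $q\equiv 11\pmod{24}$; in particular $q\geq 11$ (so Proposition \ref{prop:Horton} applies), and $2\in NQR(q)$, $-1\in NQR(q)$, $3\in QR(q)$.

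Next I would reduce the statement to the non-emptiness of
\[
M'=\bigl\{\beta\in NQR(q)\;:\;\beta-1\in NQR(q),\ \beta+1\in NQR(q),\ \beta^2+1\in QR(q)\bigr\}.
\]
Since $2\in NQR(q)$ we have $1\in QR(q)$ and $-2,-2^{-1}\in QR(q)$, so each of $-1,2,2^{-1}$ violates the condition $\beta-1\in NQR(q)$; hence $M'\subseteq NQR(q)\setminus\{-1,2,2^{-1}\}$ and any $\beta\in M'$ satisfies the hypotheses of both Lemma \ref{lemma:beta,betamedios,orto} (which rules out $4$-cycles through $\infty$ in any union $F\cup G$) and Lemma \ref{lemma:final_2} (which gives exactly two $4$-cycles, none through $\infty$, in such a union). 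Thus a single $\beta\in M'$ makes $\mathcal{F}$ and $\mathcal{G}$ a pair of orthogonal $(2,C_4)$ one-factorizations of $K_{q+1}$.

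To show $M'\neq\emptyset$ I would first try to push the substitution of Lemma \ref{lemma:conjunto_no_vacio}. Since $-1\in NQR(q)$, the map $\beta\mapsto\beta^2$ is a bijection $NQR(q)\to QR(q)$, and for $\alpha\in QR(q)$ with $\alpha\neq 1$ (note $-1\notin QR(q)$, so $\alpha^2\neq 1$) the element $\beta=-\alpha^2\in NQR(q)\setminus\{-1\}$ satisfies
\[
\beta-1=-(\alpha^2+1),\qquad \beta+1=-(\alpha^2-1),\qquad \beta^2+1=\alpha^4+1 .
\]
Hence $\beta=-\alpha^2\in M'$ whenever $\alpha^2-1,\ \alpha^2+1,\ \alpha^4+1\in QR(q)$. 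The set $A=\{\alpha\in QR(q):\alpha^2-1,\alpha^2+1\in QR(q)\}$ is the set used in the proof of Lemma \ref{lemma:conjunto_no_vacio} and is non-empty by the cyclotomic-number count of Remark \ref{remark}; for $\alpha\in A$ one has $\alpha^4-1=(\alpha^2-1)(\alpha^2+1)\in QR(q)$, and what remains is to secure $\alpha^4+1\in QR(q)$ for at least one $\alpha\in A$ (this is exactly the step that produces the contradiction in Lemma \ref{lemma:conjunto_no_vacio}).

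The step I expect to be the real obstacle is precisely this last one: the substitution merely turns the four quadratic-character conditions defining $M'$ into the four conditions $\alpha^2-1,\alpha^2+1,\alpha^4+1\in QR(q)$ together with $\alpha\in QR(q)$, so it does not by itself reduce the difficulty, and one is left with a simultaneous four-fold quadratic-character condition. If the elementary cyclotomic bookkeeping turns out to be too delicate, the robust alternative is a Weil-type estimate: writing $\chi$ for the quadratic character, one has, up to a bounded correction at $0,\pm1$,
\[
|M'|=\frac{1}{16}\sum_{\beta\in\mathbb{F}_q}(1-\chi(\beta))(1-\chi(\beta-1))(1-\chi(\beta+1))(1+\chi(\beta^2+1)),
\]
and expanding and bounding every non-principal character sum by the Weil inequality yields $|M'|=\tfrac{q}{16}+O(\sqrt q)>0$ for all $q$ beyond an explicit bound; the finitely many prime powers $q\equiv 11\pmod{24}$ below that bound are then verified by hand. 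Once $\beta\in M'$ is exhibited, the reduction in the second paragraph finishes the proof.
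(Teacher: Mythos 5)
Your reduction step is exactly the paper's: the paper's proof also consists of defining $M=\{\beta\in NQR(q)\setminus\{-2,-1,2\}:\beta-1,\beta+1\in NQR(q),\beta^2+1\in QR(q)\}$ and invoking Lemmas \ref{lemma:beta,betamedios,orto} and \ref{lemma:final_2} to reduce everything to $M\neq\emptyset$. Your observation that the exclusions $\beta\notin\{-1,2,2^{-1}\}$ are already forced by $\beta-1\in NQR(q)$ (because $2\in NQR(q)$ makes $-2$, $1$ and $-2^{-1}$ quadratic residues) is a small but genuine improvement in precision over the paper, which excludes $\{-2,-1,2\}$ rather than the set $\{2,2^{-1}\}$ that Lemmas \ref{lemma:beta,betamedios,orto} and \ref{lemma:final_2} actually require.

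Where you diverge is the existence step. The paper disposes of $M\neq\emptyset$ with the single phrase ``see proof of Lemma \ref{lemma:conjunto_no_vacio}''; note that the set in that lemma is defined by fewer conditions (it omits $\beta+1\in NQR(q)$), although the witness $\beta=-\alpha^2$ with $\alpha^2\pm1\in QR(q)$ constructed there does satisfy the extra condition, as you verify explicitly. Your diagnosis that the substitution $\beta=-\alpha^2$ merely translates the problem into finding $\alpha\in QR(q)$ with $\alpha^2-1,\alpha^2+1,\alpha^4+1\in QR(q)$, and that the condition $\alpha^4+1\in QR(q)$ is the genuine sticking point, is accurate --- this is precisely the step on which the paper's Lemma \ref{lemma:conjunto_no_vacio} turns, and its argument there (assuming $\alpha^4+1\in NQR(q)$ and deriving a contradiction from $(\alpha^2-1)(\alpha^2+1)\in QR(q)$, which only gives $\alpha^4-1\in QR(q)$, not anything about $\alpha^4+1$) does not obviously close. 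Your Weil-character-sum fallback is a sound and more robust route to $M\neq\emptyset$, but as written it is only a sketch: you have not expanded the sixteen terms, produced the explicit threshold beyond which $q/16$ dominates the $O(\sqrt q)$ error (roughly $q>3600$ with the crude bound $4\sqrt q$ per nontrivial term), or carried out the finite verification for the prime powers $q\equiv11\pmod{24}$ below it. Until that bookkeeping is done, your proof has an unexecuted (though routine) step where the paper has an appeal to a lemma whose own proof is the weak link of the whole argument.
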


\begin{proof}
Let $$M=\{\beta\in NQR(q)\setminus\{-2,-1,2\}: \beta-1,\beta+1\in NQR(q),\beta^2+1\in QR(q)\}.$$ By Lemma \ref{lemma:beta,betamedios,orto}  and Lemma \ref{lemma:final_2}, we only need to show that $M$ is not the empty set: see proof of Lemma \ref{lemma:conjunto_no_vacio}.    	
\end{proof}

Hence, by Theorem \ref{theorem:main_1} and Theorem \ref{theorem:main_2}, we have the following:

\begin{coro}
Let $q\equiv$3 (mod 8) be an odd prime power such that $q\equiv-1$ (mod 12). Then there are a pair of orthogonal one-factorization of $K_{q+1}$, $\mathcal{F}$ and $\mathcal{G}$, such that $F\cup G$ include at most two cycles of length four, for every $F,G\in\mathcal{F}\cup\mathcal{G}$.
\end{coro}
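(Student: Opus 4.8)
The plan is to recycle the orthogonal pair produced in the proof of Theorem~\ref{theorem:main_2} and to bound, one family at a time, the three kinds of unions of two members of $\mathcal F\cup\mathcal G$ (two factors from $\mathcal F$, two from $\mathcal G$, one from each). Concretely, I would fix $\beta$ exactly as in the proof of Theorem~\ref{theorem:main_2}, that is $\beta\in NQR(q)\setminus\{-2,-1,2\}$ with $\beta-1,\beta+1\in NQR(q)$ and $\beta^2+1\in QR(q)$, whose existence is established there. Let $\mathcal F=\{F_\gamma:\gamma\in\mathbb F_q\}$ be the one-factorization of $K_{q+1}$ generated by the strong starter $S_\beta$ (Proposition~\ref{prop:Horton}) and $\mathcal G=\{G_\gamma:\gamma\in\mathbb F_q\}$ the one generated by $-S_\beta$. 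Since $q\equiv3\pmod{8}$ forces $2\in NQR(q)$, we get $2^{-1}-1=-2^{-1}\in QR(q)$, so $\beta\ne 2^{-1}$ is automatic; hence $\beta$ meets the hypotheses of Lemmas~\ref{lemma:beta,betamedios,orto} and~\ref{lemma:final_2}, and, as in the proof of Theorem~\ref{theorem:main_2}, $\mathcal F$ and $\mathcal G$ are orthogonal one-factorizations for which every $F\cup G$ with $F\in\mathcal F$, $G\in\mathcal G$ contains exactly two $C_4$'s.

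It remains to handle unions of two factors from the same family. For $\mathcal F$: the chosen $\beta$ satisfies the hypotheses of Lemma~\ref{lemma:final}, so $\mathcal F$ is $(1,C_4)$ and $F\cup F'$ has exactly one $C_4$ for distinct $F,F'\in\mathcal F$. For $\mathcal G$: I would use the map $\varphi$ on $\mathbb F_q\cup\{\infty\}$ that fixes $\infty$ and sends $x\mapsto -x$. It is additive on $\mathbb F_q$, hence a graph automorphism of $K_{q+1}$; moreover $-S_\beta=\{\{-x,-x\beta\}:x\in QR(q)\}=\{\{y,y\beta\}:y\in NQR(q)\}$, from which $\varphi(F_0)=G_0$ and more generally $\varphi(F_\gamma)=G_{-\gamma}$, so $\varphi$ maps $\mathcal F$ bijectively onto $\mathcal G$. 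Since an automorphism of $K_{q+1}$ preserves, for any two one-factors, the number of $C_4$'s in their union, $\mathcal G$ inherits the $(1,C_4)$ property of $\mathcal F$, so $G\cup G'$ has exactly one $C_4$ for distinct $G,G'\in\mathcal G$. Combining the three cases, every union of two members of $\mathcal F\cup\mathcal G$ has at most two $C_4$'s, which is the assertion.

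The only point not already packaged by Theorems~\ref{theorem:main_1} and~\ref{theorem:main_2} is the ``both in $\mathcal G$'' case, and I expect it to be the main (mild) obstacle: done naively it asks for a repetition of the five-case cycle analysis of Lemma~\ref{lemma:final} for the starter $-S_\beta=S_{\beta^{-1}}$, which is why I prefer to transport the $(1,C_4)$ property across the automorphism $x\mapsto -x$. If one insists on the direct route instead, it is enough to observe that $\beta^{-1}-1=-\beta^{-1}(\beta-1)\in NQR(q)$ and $\beta^{-2}+1=\beta^{-2}(\beta^2+1)\in QR(q)$, so $\beta^{-1}$ again satisfies the hypotheses of Lemma~\ref{lemma:final}.
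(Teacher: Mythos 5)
Your proposal is correct, and it follows the same basic strategy the paper intends: take the single $\beta$ from the set $M$ in the proof of Theorem~\ref{theorem:main_2} and observe that it simultaneously satisfies the hypotheses of Lemma~\ref{lemma:final} (for the unions inside $\mathcal F$) and of Lemmas~\ref{lemma:beta,betamedios,orto} and~\ref{lemma:final_2} (for the mixed unions). The difference is that the paper's justification is just the one-line citation of Theorems~\ref{theorem:main_1} and~\ref{theorem:main_2}, which on its face covers only the $F\cup F'$ and $F\cup G$ cases; the $G\cup G'$ case, with both factors drawn from the one-factorization generated by $-S_\beta$, is left implicit. You identify this as the real content of the corollary and close it in two independent ways: by transporting the $(1,C_4)$ property of $\mathcal F$ across the automorphism $x\mapsto -x$ of $K_{q+1}$, which sends $F_\gamma$ to $G_{-\gamma}$ and hence $\mathcal F$ onto $\mathcal G$ while preserving the cycle structure of unions; and, alternatively, by noting that $-S_\beta=S_{\beta^{-1}}$ and checking that $\beta^{-1}-1=-\beta^{-1}(\beta-1)\in NQR(q)$ and $\beta^{-2}+1=\beta^{-2}(\beta^2+1)\in QR(q)$, so $\beta^{-1}$ again meets the hypotheses of Lemma~\ref{lemma:final}. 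Either route is sound, and your remark that $\beta\ne 2^{-1}$ is automatic (since $2^{-1}-1=-2^{-1}\in QR(q)$ when $q\equiv3\pmod 8$) is a useful sanity check for invoking Lemma~\ref{lemma:final_2}. In short, your write-up is more complete than the paper's own derivation; the only caveat you inherit from the paper is that the nonemptiness of $M$ (with the extra conditions $\beta+1\in NQR(q)$ and $\beta\notin\{-2,2\}$) is asserted there by pointing back to Lemma~\ref{lemma:conjunto_no_vacio}, which literally only produces $\beta$ with $\beta-1\in NQR(q)$ and $\beta^2+1\in QR(q)$.
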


{\bf Acknowledgment}

\

Research was partially supported by SNI and CONACyT.


\begin{thebibliography}{}
	\bibitem{Avila}
	C. A. Alfaro, C. Rubio-Montiel and A. Vázquez-Ávila, \emph{On two-quotient strong starters for $\mathbb{F}_q$}, Util. Math. {\bf 112} (2019), 287--302.
	
	\bibitem{MR0364013}
	B. A. Anderson, \emph{A class of starter induced {$1$}-factorizations}, Graphs and combinatorics ({P}roc. {C}apital {C}onf., {G}eorge {W}ashington {U}niv., {W}ashington, {D}.{C}., 1973), Springer, Berlin, 1974, pp. 180--185. Lecture Notes in Math., Vol. 406.
	
	\bibitem{MR728501}
	B. A. Anderson, P. J. Schellenberg and D. R. Stinson, 
	\emph{The existence of {H}owell designs of even side}, J. Combin. Theory Ser. A {\bf 36} (1984), no. 1, 23--55.
	
	\bibitem{Bao}
	J. Bao and L. Ji, \emph{Two Orthogonal 4-Cycle-Free One-Factorizations of Complete Graphs}, Graphs Combin. {\bf 35} (2019), no. 2, 373--392.
	
	\bibitem{burton2007elementary}
	D. M. Burton, \emph{Elementary number theory}, sixth ed., McGraw-Hill, 2007.
	
	
	\bibitem{book:206537}
	C. J. Colbourn and J. H. Dinitz, \emph{Handbook of Combinatorial Designs}, 2nd ed., Discrete mathematics and its applications, Chapman \& Hall/Taylor \& Francis, 2007.
	
	\bibitem{MR0325419}
	J. F. Dillon and R. A. Morris, \emph{A skew {R}oom square of side {$257$}}, Utilitas Math. {\bf 4} (1973), 187--192.
	
	\bibitem{dinitz1984}
	J. H. Dinitz, \emph{Room n-cubes of low order}, J. Austral. Math. Soc. Ser. A {\bf 36} (1984), 237--252.	
	
	\bibitem{MR2206402}
	J. H. Dinitz and P. Dukes, \emph{On the structure of uniform one-factorizations from starters in finite fields}, Finite Fields Appl. {\bf 12} (2006), no. 2, 283--300.
	
	\bibitem{MR633117}
	J. H. Dinitz and D. R. Stinson, \emph{The spectrum of {R}oom cubes}, European J. Combin. {\bf 2} (1981), no. 3, 221--230.	
	
	\bibitem{MR1010576}
	J. H. Dinitz and D. R. Stinson,	\emph{Some new perfect one-factorizations from starters in finite fields}, J. Graph Theory, {\bf 13} (1989), no. 4, 405--415.	
	
	\bibitem{MR0392622}
	K. B. Gross, \emph{Some new classes of strong starters}, Discrete Math. {\bf 12} (1975), no. 3, 225--243.
	
	\bibitem{MR2445243}
	G. H. Hardy  and E. M. Wright, \emph{An introduction to the theory of numbers},
	sixth ed., Oxford University Press, Oxford, 2008, Revised by D. R. Heath-Brown and J. H. Silverman, With a foreword by Andrew Wiles.
	
	\bibitem{MR623318}
	J. D. Horton, \emph{Room designs and one-factorizations}, Aequationes Math. {\bf 12} (1981), no. 1, 56--63.
	
	\bibitem{MR685627}
	J. D. Horton, \emph{The construction of {K}otzig factorizations}, Discrete Math. {\bf 43} (1983), no. 2-3, 199--206.
	
	\bibitem{MR1044227}
	J. D. Horton, \emph{Orthogonal starters in finite abelian groups}, Discrete Math. {\bf 79} (1990), no. 3, 265--278.	
	
	\bibitem{MR808085}
	W. L. Kocay, D. R. Stinson and S. A. Vanstone, \emph{On strong starters in cyclic groups}, Discrete Math. {\bf 56} (1985), no. 1, 45--60.
	
	
	\bibitem{Meszka}
	M. Meszka, \emph{k-cycle free one-factorizations of complete graphs}, Electron. J. Combin. {\bf 16} (2009), no. 1, 255--258.	
	
	\bibitem{MR0249314}
	R. C. Mullin and E. Nemeth, \emph{An existence theorem for room squares}, Canad. Math. Bull. {\bf 12} (1969), 493--497.
	
	\bibitem{MR0260604}
	R. C. Mullin and E. Nemeth, \emph{On furnishing {R}oom squares}, J. Combinatorial Theory {\bf 7} (1969), 266--272.	
	
	\bibitem{Skolem}
	O. Ogandzhanyants, M. Kondratieva and N. Shalaby, \emph{Strong Skolem starters}, J. Combin. Des., {\bf 27} (2019), no. 1, 5--21.	
	
	\bibitem{MR0314644}
	D. K. Ray-Chaudhuri and R. M. Wilson, \emph{Solution of {K}irkman's schoolgirl problem}, Combinatorics ({P}roc. {S}ympos. {P}ure {M}ath., {V}ol. {XIX},
	{U}niv. {C}alifornia, {L}os {A}ngeles, {C}alif., 1968), Amer. Math. Soc., Providence, R.I. (1971), 187--203.
	
	\bibitem{Rosa}
	A. Rosa, \emph{Perfect 1-factorizations}, Math. Slovaca. {\bf 69} (2019),
	479--496.
	
	\bibitem{MR833796}
	A. Rosa and S. A. Vanstone, \emph{On the existence of strong {K}irkman cubes of order {$39$} and block size {$3$}}, Algorithms in combinatorial design theo\-ry, North-Holland Math. Stud., vol. 114, North-Holland, Amsterdam (1985), no. 1, 309--319.
	
	%\bibitem{ShalabyThesis}
	%N. Shalaby, \emph{Skolem sequences: generalizations and applications.} Thesis (PhD). McMaster University (Canada), 1991.	
	
	\bibitem{MR0234587}
	R. G. Stanton and R.C. Mullin,	\emph{Construction of {R}oom squares}, Ann. Math. Statist. {\bf 39} (1968), 1540--1548.
	
	\bibitem{MR793636}
	D. R. Stinson and S. A. Vanstone, \emph{A {K}irkman square of order {$51$} and block size {$3$}}, Discrete Math. {\bf 55} (1985), no. 1, 107--111.
	
	\bibitem{AvilaSkolem}
	A. Vázquez-Ávila, \emph{A note on strong Skolem starters}, Discrete Math. {\bf 343} (2020), no. 2, 111672.
	
	\bibitem{AvilaSkolem3}
	A. Vázquez-Ávila, \emph{On strong Skolem starters for $\mathbb{Z}_{pq}$}, AKCE J. Graphs Comb. {\bf 17} (2020), No. 3, 988--991.
	
	\bibitem{AvilaSkolem2}
	A. Vázquez-Ávila, \emph{On strong Skolem starters}, J. Discret. Math. Sci. Cryptogr. Accepted.
	
	\bibitem{AvilaC4free}
	A. Vázquez-Ávila, \emph{A note on two orthogonal totally $C_4$-free one-factorizations of complete graphs}, Bol. Soc. Mat. Mex. {\bf 27} (2021), no. 1, 5.
	
	
\end{thebibliography}
\end{document}